\tikzstyle{subgroup}=[scale=1]
\newtheorem{theorem}{Theorem}
\newtheorem{proposition}{Proposition}
\newtheorem{corollary}{Corollary}
\newtheorem{lemma}{Lemma}
\def\CD#1{\mathcal{CD}#1}
\title{\bf Chermak-Delgado Simple Groups}
\author{Ryan McCulloch}
\address{Department of Mathematics, University of Bridgeport, Bridgeport, CT 06604}
\email{rmccullo@bridgeport.edu}
\date{\today}
\begin{document}

\begin{abstract}
This paper provides the first steps in classifying the finite solvable groups having Property A, which is a property involving abelian normal subgroups.  We see that this classification is reduced to classifying the solvable Chermak-Delgado simple groups, which the author defines.  The author completes a classification of Chermak-Delgado simple groups under certain restrictions on the primes involved in the group order.
\end{abstract}

\maketitle

\section{Introduction and Motivation}

This paper classifies the groups of order $qp^k$ ($q,p$ primes) satisfying the property that every quotient over the centralizer of a non-trivial abelian normal subgroup has larger order than the subgroup.  To develop this classification, connections are made between groups having this property and the study of Chermak-Delgado lattices.  Ultimately, for such groups, the action of $G/C_G(N)$ on a minimal normal subgroup $N$ is determined, and this action greatly restricts possibilities for $p$ and $q$ (Theorem 3).  The primitive case in which $N=C_G(N)$ is Corollary 3.

In this paper all groups are assumed finite.  A group $G$ is said to have \textbf{Property A} if for every non-trivial abelian normal subgroup, $A$, of $G$, $|G/C_G(A)| > |A|$.  In other words, a group $G$ has Property A if for every non-trivial abelian normal subgroup $A$, of $G$, $G/C_G(A)$ embeds into $Aut(A)$ as a subgroup of size larger than $A$.   The nonabelian simple groups satisfy this property vacuously.  It is the author's ultimate goal to classify all of the solvable groups having Property A.

$S_4$ has Property A since the Klein $4$-group, $K_4$, is the unique non-trivial abelian normal subgroup of $S_4$, and $K_4$ is self-centralizing in $S_4$, and thus $|S_4/C_{S_4}(K_4)| = 6 > 4 = |K_4|$.  One may view $S_4$ as a semidirect product $[V]Aut(V)$ where $V = C_2 \times C_2$.  Through this viewpoint, one discovers many examples of groups with Property A.

A group $G$ is said to be \textbf{primitive} if $G$ contains a maximal subgroup $M$ so that ${core}_G(M) = 1$.  Baer\cite{Bae57} characterized primitive groups into 3 types.  See Doerk and Hawkes\cite{Doe92} Theorem A.15.2 for the classification and proof.  A type 1 primitive group has the structure of a semidirect $[V]H$ where $V$ is a vector space and $H$ is a subgroup of $Aut(V)$ which acts irreducibly on $V$.  Thus $V$ is the unique abelian normal subgroup of $G$ and $V$ is self-centralizing in $G$.  The type 1 primitive groups include all of the solvable primitive groups.

\begin{proposition} Suppose $G = [V_1]H_1 \times \dots \times [V_n]H_n$ is a direct product of type 1 primitive groups where each $|H_i| > |V_i|$.  Then $G$ has Property A.
\end{proposition}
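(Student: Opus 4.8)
The plan is to reduce everything to two structural facts about the factors $G_i := [V_i]H_i$ together with a clean formula for centralizers inside a direct product. First I would establish that the only abelian normal subgroups of a type 1 primitive group $G_i$ are $1$ and $V_i$. Indeed, if $A_i \trianglelefteq G_i$ is abelian then $A_i \cap V_i \trianglelefteq G_i$ lies inside $V_i$, so irreducibility of the $H_i$-action (equivalently, minimality of $V_i$ as a normal subgroup) forces $A_i \cap V_i \in \{1, V_i\}$; since $[A_i, V_i] \le A_i \cap V_i$ and $C_{G_i}(V_i) = V_i$, each case collapses to $A_i = V_i$ or $A_i = 1$ respectively.

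Next, given an arbitrary non-trivial abelian normal subgroup $A \trianglelefteq G$, I would push it through the coordinate projections $\pi_i : G \to G_i$. Each $\pi_i$ is surjective, so $\pi_i(A) \trianglelefteq G_i$ is abelian, and by the first step $\pi_i(A) \in \{1, V_i\}$. Writing $S = \{i : \pi_i(A) = V_i\}$, we have $A \le \prod_{i \in S} V_i =: V_S$, and $S \ne \emptyset$ because $A \ne 1$; in particular $|A| \le |V_S| = \prod_{i\in S}|V_i|$.

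The computational heart is the observation that centralizers decouple across the direct factors: an element $g = (g_1, \dots, g_n)$ centralizes $A$ if and only if $g_i$ centralizes $\pi_i(A)$ for every $i$, since conjugation in a direct product acts coordinatewise and, as $a$ runs over $A$, its $i$-th coordinate runs over $\pi_i(A)$. This yields $C_G(A) = \prod_i C_{G_i}(\pi_i(A))$. Using $C_{G_i}(V_i) = V_i$ for $i \in S$ and $C_{G_i}(1) = G_i$ otherwise, we obtain $C_G(A) = V_S \times \prod_{i \notin S} G_i$, and hence $|G/C_G(A)| = \prod_{i \in S} |G_i/V_i| = \prod_{i\in S}|H_i|$.

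Finally, combining the two bounds and using that $S$ is non-empty while $|H_i| > |V_i|$ for every $i$,
\[
|G/C_G(A)| = \prod_{i\in S}|H_i| \;>\; \prod_{i\in S}|V_i| = |V_S| \;\ge\; |A|,
\]
which is exactly Property A. The only genuinely delicate point is that $A$ need not equal $V_S$ — it may sit as a proper subdirect product inside $V_S$ — so I expect the main obstacle to be justifying the centralizer formula $C_G(A) = \prod_i C_{G_i}(\pi_i(A))$ carefully, since it is precisely this coordinatewise decoupling (rather than any assumption that $A = V_S$) that makes the index $|G/C_G(A)|$ depend only on the projections and lets the crude estimate $|A|\le|V_S|$ suffice.
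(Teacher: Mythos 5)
Your proof is correct, and it takes a route that differs from the paper's at precisely the point you flag as delicate. The paper's proof is a two-line argument that asserts every non-trivial abelian normal subgroup $A$ of $G$ \emph{is exactly} a product $X_1 \times \dots \times X_n$ with each $X_i \in \{1, V_i\}$ (i.e.\ $A = V_S$ in your notation), and then counts. That assertion is true, but justifying it requires more than projections: one must rule out subdirect (``diagonal'') subgroups of $V_S$, e.g.\ by observing that the $V_i$ are pairwise non-isomorphic as modules for $H_1 \times \dots \times H_n$ (their kernels differ since each $H_i \neq 1$ acts faithfully), so the semisimple module $V_S$ has only coordinate submodules. You sidestep this entirely: you only need the containment $A \le V_S$, because the centralizer formula $C_G(A) = \prod_i C_{G_i}(\pi_i(A))$ shows that $|G/C_G(A)|$ depends only on the projections $\pi_i(A)$, not on $A$ itself, and then the crude bound $|A| \le |V_S|$ closes the argument. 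Notably, that centralizer formula is exactly the fact the paper records just before its Proposition~3 ($C_G(H) = C_G(\pi_1(H) \times \dots \times \pi_n(H))$ for $H$ a subgroup of a direct product), so your proof stays within the paper's toolkit while being more self-contained and rigorous at the one step the paper leaves implicit. The trade-off is only length: the paper's version is shorter but leans on an unproved structural claim (or on the Baer classification fact that $V_i$ is the \emph{unique} non-trivial abelian normal subgroup of each factor), whereas yours is airtight from elementary facts, including your clean two-case argument ($A_i \cap V_i = V_i$ or $1$) for the key lemma about the factors.
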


\begin{proof} A non-trivial abelian normal subgroup, $A$, of $G$ has the form $A = X_1 \times \dots \times X_n$ where each $X_i$ is either $V_i$ or the identity; and at least one $X_j$ is nonidentity.  As each $V_i$ is self-centralizing in $[V_i]H_i$ and $|H_i| > |V_i|$, one easily sees that $|G/C_G(A)| > |A|$.
\end{proof}

This provides plenty of examples of groups having Property A.  Our goal of this section is to prove that a group $G$ has Property A if and only if $G$ is a direct product of Chermak-Delgado simple groups, which we shall define shortly.

Let $G$ be a group and $H \leq G$.  Then $m_G(H) = |H||C_G(H)|$ is the Chermak-Delgado measure of $H$ in $G$.  Let $m^*(G) = \max \{ m_G(H) \, | \, H \leq G \}$ and then define $\CD(G) = \{ H \leq G \, | \, m_G(H) = m^*(G) \}$.  The subgroup collection $\CD(G)$ forms a sublattice of the lattice of subgroups of $G$.  Furthermore, if $H,K \in \CD(G)$, then $\langle H,K \rangle = HK$, $C_G(H) \in \CD(G)$, $C_G(H \cap K) = C_G(H)C_G(K)$, and $C_G(C_G(H)) = H$.
 
This modular, self-dual sublattice of the lattice of subgroups of $G$ was first introduced in a paper by Chermak and Delgado\cite{Che89} in 1989.  Proofs of the properties of $\CD(G)$ stated in the previous paragraph are found in Section 1.G of Isaacs' Finite Group Theory\cite{Isa08}.  We will use these basic properties extensively throughout the proofs of this paper.

\begin{proposition} Suppose $M$ is the least element in $\CD(G)$.  Then $M$ is an abelian normal subgroup of $G$ and $Z(G) \leq M$.
\end{proposition}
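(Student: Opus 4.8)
The plan is to derive all three conclusions from the listed structural properties of $\CD(G)$, together with two elementary observations: that the Chermak-Delgado measure is invariant under conjugation, and that $H \mapsto C_G(H)$ is an order-reversing involution on $\CD(G)$ (this being exactly what $C_G(C_G(H)) = H$ and $C_G(H) \in \CD(G)$ give us). In particular, since $M$ is the least element, its image $C_G(M)$ under this duality is the greatest element, and the inequality $M \leq C_G(M)$ will be the engine for the abelian claim.

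First I would prove that $M$ is abelian. Because $C_G(M) \in \CD(G)$ and $M$ is the least element of $\CD(G)$, we have $M \leq C_G(M)$; but $M \leq C_G(M)$ says precisely that every element of $M$ commutes with every element of $M$, so $M$ is abelian. For normality, I would first record that conjugation preserves the measure: for $g \in G$ and $H \leq G$ one has $C_G(H^g) = C_G(H)^g$, whence $m_G(H^g) = |H^g|\,|C_G(H)^g| = |H|\,|C_G(H)| = m_G(H)$. Thus $\CD(G)$ is closed under conjugation. Applying this to $M$, each conjugate $M^g$ again lies in $\CD(G)$, so $M \leq M^g$ since $M$ is least; as $|M^g| = |M|$ this forces $M = M^g$ for every $g$, giving $M \trianglelefteq G$.

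Finally, for $Z(G) \leq M$ I would use a maximality argument, which is the step needing the most care and is the genuinely nontrivial idea here. Since $Z(G)$ is central, hence normal, $M\,Z(G)$ is a subgroup, and since every element of $G$ centralizes $Z(G)$ we get $C_G(M\,Z(G)) = C_G(M)$. Therefore $m_G(M\,Z(G)) = |M\,Z(G)|\,|C_G(M)| \geq |M|\,|C_G(M)| = m^*(G)$. But $m^*(G)$ is the maximal value of the measure, so the inequality is forced to be an equality, giving $|M\,Z(G)| = |M|$ and hence $Z(G) \leq M$. The same product trick in fact shows $Z(G) \leq H$ for every $H \in \CD(G)$, which is consistent with $M$ being the least element. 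The main obstacle is recognizing this trick: enlarging $M$ by the central subgroup cannot decrease the measure, so maximality forbids it from increasing either, pinning $Z(G)$ inside $M$; the abelian and normality statements are then immediate consequences of the ``least element'' hypothesis combined with the self-duality and conjugation-invariance of the lattice.
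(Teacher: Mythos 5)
Your proof is correct, and its first two parts coincide with the paper's own argument: abelianness of $M$ from $M \leq C_G(M)$ (since $C_G(M) \in \CD(G)$ and $M$ is least), and normality from conjugation-invariance of the measure plus minimality (you spell out the order computation that the paper compresses into ``closed under the order preserving operation of conjugation''). Where you genuinely diverge is the containment $Z(G) \leq M$. The paper gets this in one line from the duality property: since $Z(G)$ centralizes everything, in particular $Z(G) \leq C_G(C_G(M)) = M$. You instead run a measure-maximality argument on the product $M\,Z(G)$: since $C_G(M\,Z(G)) = C_G(M)$, maximality of $m^*(G)$ forces $|M\,Z(G)| = |M|$, hence $Z(G) \leq M$. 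Both are valid. Your route is more self-contained --- it uses only the definition of $m^*(G)$ and the fact that $M \in \CD(G)$, avoiding the nontrivial cited property $C_G(C_G(H)) = H$ at this step --- and, as you observe, it directly gives $Z(G) \leq H$ for every $H \in \CD(G)$ with no reference to a least element (though the paper's duality argument generalizes equally well, as $Z(G) \leq C_G(C_G(H)) = H$ for any $H \in \CD(G)$). The trade-off is length: granted the lattice machinery the paper assumes, the step you describe as the genuinely nontrivial idea is precisely the one the paper dispatches in a single line.
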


\begin{proof} As $M$ is the least element in $\CD(G)$, and $C_G(M) \in \CD(G)$, $M \leq C_G(M)$ and so $M$ is abelian.  As $M$ is the least element in $\CD(G)$, and $\CD(G)$ is closed under the order preserving operation of conjugation, we see that $M \unlhd G$.  Finally, $C_G(M) \leq G$, and so $Z(G) \leq C_G(C_G(M)) = M$.
\end{proof}

We now consider direct products.  Given $H \leq G_1 \times \dots \times G_n$, one sees that $C_G(H) = C_G({\pi}_1(H) \times \dots \times {\pi}_n(H))$, where ${\pi}_i$ is the projection of $H$ on the $i$-th coordinate.  From here one sees that the Chermak-Delgado lattice of a direct product is the Cartesian product of the Chermak-Delgado lattices.  We state this below.

\begin{proposition} $\CD(G_1 \times \dots \times G_n) = \CD(G_1) \times \dots \times \CD(G_n)$.
\end{proposition}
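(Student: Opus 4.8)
The plan is to combine two facts: the centralizer identity stated immediately above the proposition, and a direct computation of the Chermak-Delgado measure on ``box'' subgroups. Write $G = G_1 \times \dots \times G_n$, and for $H \leq G$ set $\hat{H} = \pi_1(H) \times \dots \times \pi_n(H)$, so that $H \leq \hat{H}$. The given identity then reads $C_G(H) = C_G(\hat{H})$.

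First I would show that every member of $\CD(G)$ is a box product, i.e.\ satisfies $H = \hat{H}$. Since $H \leq \hat{H}$ we have $|H| \leq |\hat{H}|$, with strict inequality unless $H = \hat{H}$; combined with $C_G(H) = C_G(\hat{H})$ this yields $m_G(H) = |H|\,|C_G(H)| \leq |\hat{H}|\,|C_G(\hat{H})| = m_G(\hat{H})$, again strict unless $H = \hat{H}$. Hence any subgroup of maximal measure must coincide with its own box product, and I may restrict attention to subgroups of the form $X_1 \times \dots \times X_n$ with $X_i \leq G_i$.

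Next I would record the centralizer of a box product. Because elements lying in distinct direct factors commute, a tuple $(g_1, \dots, g_n)$ centralizes $X_1 \times \dots \times X_n$ if and only if each $g_i$ centralizes $X_i$ in $G_i$; thus $C_G(X_1 \times \dots \times X_n) = C_{G_1}(X_1) \times \dots \times C_{G_n}(X_n)$. Multiplying orders, the measure factors completely:
\[
m_G(X_1 \times \dots \times X_n) = \prod_{i=1}^{n} |X_i|\,|C_{G_i}(X_i)| = \prod_{i=1}^{n} m_{G_i}(X_i).
\]

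Finally I would optimize. For any box product the displayed identity gives $\prod_i m_{G_i}(X_i) \leq \prod_i m^*(G_i)$, and this bound is attained by choosing each $X_i \in \CD(G_i)$; since by the first step the maximum over all subgroups of $G$ is attained at a box product, I conclude $m^*(G) = \prod_i m^*(G_i)$. Equality in the factored measure then forces each $X_i$ to realize $m^*(G_i)$, so $H = X_1 \times \dots \times X_n$ lies in $\CD(G)$ if and only if every $X_i \in \CD(G_i)$, which is precisely the asserted equality $\CD(G) = \CD(G_1) \times \dots \times \CD(G_n)$. The only step demanding genuine care is the first one---justifying that the maximizers are box products---since everything afterward is a factorization that proceeds coordinatewise; but that step becomes immediate once the stated identity $C_G(H) = C_G(\hat{H})$ is in hand.
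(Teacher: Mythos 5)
Your proposal is correct and takes essentially the same approach the paper sketches: the centralizer identity $C_G(H) = C_G(\pi_1(H) \times \dots \times \pi_n(H))$ forces every maximizer of the measure to be a box subgroup, after which the measure factors coordinatewise and the maximum is the product of the $m^*(G_i)$. The paper compresses all of this into ``from here one sees,'' and your write-up simply supplies the details of that same argument.
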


Given $H,K \in \CD(G)$, we use the notation $H \prec K$ to mean that $H < K$ and there is no $R \in \CD(G)$ so that $H < R < K$.  The following result appeared in a paper by Brewster-Wilcox\cite{Bre12}.

\begin{proposition}
Let $H,K \in \CD(G)$ with $H \prec K$.  Then $H \unlhd K$.  
\end{proposition}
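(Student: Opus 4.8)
The plan is to exploit conjugation against the covering hypothesis. First I would fix $k \in K$ and observe that conjugation by $k$ preserves the Chermak-Delgado measure, since $|H^k| = |H|$ and $C_G(H^k) = C_G(H)^k$; hence $H^k \in \CD(G)$. Because $H \leq K$ and $k \in K$, we have $H^k \leq K^k = K$, so the join $HH^k = \langle H, H^k \rangle$ again lies in $\CD(G)$ and satisfies $H \leq HH^k \leq K$. As $H \prec K$, the covering hypothesis forces $HH^k = H$ or $HH^k = K$; in the first case $H^k \leq H$, and comparing orders gives $H^k = H$. To treat all $k$ at once it is cleanest to pass to the normal closure $H^K = \langle H^k : k \in K \rangle$, which, being a finite join of members of $\CD(G)$, itself lies in $\CD(G)$ with $H \leq H^K \leq K$. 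The covering relation then yields the dichotomy that either $H^K = H$, whence $H \unlhd K$ and we are finished, or $H^K = K$. Thus the entire proposition reduces to excluding the alternative $H^K = K$, equivalently to showing that $HH^k = K$ cannot happen for a conjugate $H^k \neq H$.

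The hard part will be eliminating this second alternative. Assuming for contradiction that $H \not\unlhd K$, there are distinct conjugates with $HH^k = K$, and by modularity the intersection $D = H \cap H^k$ is covered by each of $H$ and $H^k$, producing a length-two interval $[D,K]$ in $\CD(G)$ whose atoms include the conjugate pair. I would transport this through the self-duality $X \mapsto C_G(X)$: using $C_G(H \cap H^k) = C_G(H) C_G(H^k)$ and $C_G(HH^k) = C_G(H) \cap C_G(H^k)$, the conjugate centralizers $C_G(H)$ and $C_G(H^k) = C_G(H)^k$ meet exactly in $C_G(K)$ and join to $C_G(D)$, giving the dual interval. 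The goal is to extract a contradiction from this symmetric configuration, in which $K$ permutes a family of pairwise ``perpendicular'' atoms while fixing the least element $M$ of $\CD(G)$; since $M$ is contained in every member of $\CD(G)$, we have $M \leq K \cap C_G(K) = Z(K)$, so $M$ is central in $K$.

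The genuine obstruction is that every relevant order is forced into balance: a direct count gives $|D| = |H|^2/|K|$ and $|C_G(D)| = |C_G(H)|^2/|C_G(K)|$, so $m_G(D) = m_G(H)^2/m_G(K) = m^*(G)$ automatically, and no numerical contradiction survives from the two-subgroup identity alone. My intended resolution is therefore to argue by induction on $|G|$, or to localize at the section $C_G(M)/M$ — whose maximum and minimum coincide with those of $\CD(G)$ and whose center is trivial by maximality of $M$ — reducing to the case where $M = Z(G)$ is the minimum and $G$ is the maximum. In that reduced situation I would use the permutation action of $K$ on the conjugates of $H$ together with the centralizer duality to force a fixed point, i.e.\ $H^k = H$. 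I expect this last step, namely converting the lattice-theoretic and duality symmetry into actual fixed-point information about the conjugation action, to be the real difficulty, with modularity and the measure identities serving only to stage it.
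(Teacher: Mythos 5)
Your first paragraph is correct and is exactly the right way to begin: $H^k \in \CD(G)$, the join $HH^k = \langle H, H^k\rangle$ lies in $\CD(G)$ with $H \leq HH^k \leq K$, and the covering hypothesis leaves only $HH^k = H$ (forcing $H^k = H$ by order comparison) or $HH^k = K$. But from that point on the proposal has a genuine gap, which you acknowledge yourself: the alternative $HH^k = K$ is never excluded. Your third paragraph correctly verifies that no numerical contradiction is available (the measure of $D = H \cap H^k$ comes out to $m^*(G)$ automatically), but the remedies you then propose — induction on $|G|$, localizing at $C_G(M)/M$, a fixed-point argument extracted from self-duality — are not carried out, and none of them can work as stated from lattice data alone: diamond intervals with the exact symmetric configuration you describe genuinely occur in Chermak--Delgado lattices (the quasi-antichain lattices), so no argument using only the lattice structure, modularity, and centralizer duality can rule the configuration out. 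What must be exploited is the one piece of information you never use: the element conjugating $H$ to $H^k$ lies in $K$ itself.

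That is the missing idea, and it is elementary. Suppose $HH^k = K$ for some $k \in K$. Then $k \in K = HH^k$, so $k = h_1\,(k^{-1}h_2k)$ for some $h_1, h_2 \in H$; cancelling the rightmost $k$ gives $1 = h_1k^{-1}h_2$, i.e. $k = h_2h_1 \in H$. (In general, $k \in HH^k$ if and only if $k \in H$.) But then $H^k = H$ and $K = HH^k = H$, contradicting $H < K$. So the second horn of your dichotomy collapses, $HH^k = H$ for every $k \in K$, and $H \unlhd K$. Note that the paper itself does not prove this proposition — it cites Brewster--Wilcox — and the published proof is essentially this element computation grafted onto the covering dichotomy you already set up; your reduction to the normal closure $H^K$, while correct, becomes unnecessary once this single line is in place.
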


We are now in a position to prove the main result of this section.  A group $G$ is \textbf{Chermak-Delgado simple} if $\CD(G) = \{ 1, G \}$.  All of the nonabelian simple groups are Chermak-Delgado simple, and $S_4$ is an example of a solvable Chermak-Delgado simple group.  In fact any type 1 primitive group $[V]H$ with $|H| > |V|$ is Chermak-Delgado simple.

\begin{theorem} A group $G$ has Property A if and only if $G$ is a direct product of Chermak-Delgado simple groups.
\end{theorem}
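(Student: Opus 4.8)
The plan is to first reformulate Property A purely in terms of the Chermak-Delgado lattice, and then use this reformulation to attack both implications. Specifically, I would prove the following key equivalence: \emph{$G$ has Property A if and only if $1$ is the least element of $\CD(G)$ and no non-trivial abelian normal subgroup of $G$ lies in $\CD(G)$}. The forward direction of this equivalence is where Proposition 2 does the heavy lifting: if $G$ has Property A, then the least element $M$ of $\CD(G)$ is abelian and normal by Proposition 2, and were $M$ non-trivial we would have $m_G(M) = m^*(G) \geq m_G(1) = |G|$, i.e. $|G/C_G(M)| \leq |M|$, contradicting Property A; hence $M = 1$ and $m^*(G) = |G|$. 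Any non-trivial abelian normal $A \in \CD(G)$ would then satisfy $m_G(A) = |G|$, again contradicting Property A. Conversely, if $1$ is least in $\CD(G)$ then $m^*(G) = |G|$, so every non-trivial abelian normal $A$ has $m_G(A) \leq |G|$ with equality only if $A \in \CD(G)$; excluding that case forces the strict inequality $|G/C_G(A)| > |A|$.

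With this reformulation in hand, the direction ``direct product $\Rightarrow$ Property A'' is short. If $G = G_1 \times \dots \times G_n$ with each $\CD(G_i) = \{1, G_i\}$, then by Proposition 3 the elements of $\CD(G)$ are exactly the products $X_1 \times \dots \times X_n$ with $X_i \in \{1, G_i\}$. The least such element is $1$, and the only abelian one is $1$ itself (a Chermak-Delgado simple group is non-trivial with trivial center, hence non-abelian, so no factor $G_i$ is abelian). Thus both conditions of the reformulation hold and $G$ has Property A.

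The substantive direction is ``Property A $\Rightarrow$ direct product of Chermak-Delgado simple groups,'' which I would prove by induction on $|G|$. Assuming $G \neq 1$ has Property A, the reformulation gives that $\CD(G)$ has least element $1$ and greatest element $G = C_G(1)$, so there is an atom $A$ with $1 \prec A$. Using self-duality, $C_G(A) \prec G$, so $C_G(A) \unlhd G$ by Proposition 4, and hence $A = C_G(C_G(A)) \unlhd G$. The crucial step is to show $Z(A) = 1$: since meets in $\CD(G)$ are intersections, $A \cap C_G(A) = Z(A) \in \CD(G)$, and $Z(A)$ is an abelian normal subgroup of $G$; by the reformulation it must be trivial. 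Then $|A \, C_G(A)| = |A||C_G(A)| = m_G(A) = m^*(G) = |G|$ forces $G = A \times C_G(A)$. By Proposition 3, $\CD(A)$ is a factor of $\CD(G)$ whose top is $A$ (the $A$-projection of $G$) and in which $A$ covers $1$; hence $\CD(A) = \{1, A\}$ and $A$ is Chermak-Delgado simple. A short argument shows $C_G(A)$ again satisfies the two conditions of the reformulation, so it has Property A, and induction finishes the decomposition.

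The main obstacle I anticipate is precisely the identification of the direct factor: showing that an atom $A$ of $\CD(G)$ satisfies $Z(A) = 1$ and splits off as $G = A \times C_G(A)$. This is where all of the machinery must be combined --- Proposition 2 (to license the reformulation), the fact that meets in $\CD(G)$ are intersections together with $Z(A) = A \cap C_G(A) \in \CD(G)$, and Proposition 4 (to make $C_G(A)$, and hence $A$, normal). Verifying that meets in $\CD(G)$ are genuine intersections, via $H \cap K = C_G(C_G(H)\,C_G(K)) = C_G(C_G(H \cap K))$, is a small point I would pin down explicitly, since the whole decomposition rests on it.
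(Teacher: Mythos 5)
Your proposal is correct, but the mechanism you use for the hard direction is genuinely different from the paper's. Your preliminary reformulation (Property A holds iff $1$ is the least element of $\CD(G)$ and no non-trivial abelian normal subgroup of $G$ lies in $\CD(G)$) and your treatment of the backward direction match the paper's argument in substance. For the forward direction, however, the paper does not induct: it takes all atoms $A_1, \dots, A_k$ of $\CD(G)$ simultaneously, shows each is normal in $G$ (the same appeal to self-duality and Proposition 4 that you make) and non-abelian (by Property A), then checks that distinct atoms intersect trivially and hence commute, that each $A_i$ meets the product of the remaining atoms trivially (otherwise $A_i \leq C_G(A_i)$, contradicting non-abelianness), and that $C_G(A_1 \cdots A_k) = 1$ (otherwise some atom would be abelian); taking double centralizers then gives $G = A_1 \times \cdots \times A_k$ in one stroke, with each $A_i$ Chermak-Delgado simple since Proposition 3 would otherwise produce an element of $\CD(G)$ strictly between $1$ and $A_i$. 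You instead peel off a single atom $A$, using the observation that $Z(A) = A \cap C_G(A)$ lies in $\CD(G)$ (meets are intersections) and is abelian and normal in $G$, hence trivial, which forces $|A \, C_G(A)| = m_G(A) = |G|$ and so $G = A \times C_G(A)$; this step is sound, and the obligations you flagged are genuine but easily discharged: $\CD(A) = \{1,A\}$ follows from Proposition 3 exactly as you say, and the complement inherits your two conditions because any abelian $B \unlhd C_G(A)$ with $B \in \CD(C_G(A))$ is centralized by $A$, hence normal in $G = A\,C_G(A)$, and lies in $\CD(G)$ by Proposition 3, contradicting Property A for $G$ unless $B = 1$. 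What each approach buys: the paper's all-atoms argument avoids induction and any question of passing Property A to a subgroup, while yours isolates the direct-factor mechanism ($Z(A) = 1$ forcing a splitting) and handles only one atom at a time, at the price of that inheritance argument. One cosmetic point: the paper's convention admits the trivial group as Chermak-Delgado simple, so your claim that such groups are non-trivial with trivial center should be read as applying to the non-trivial factors only; this does not affect the argument.
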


\begin{proof} $\Leftarrow$) Suppose $G$ is a direct product of Chermak-Delgado simple groups.  From Proposition 3 we know that $\CD(G)$ is a Boolean lattice with $m^*(G) = |G|$, and the only abelian subgroup in $\CD(G)$ is $1$.  So for every non-trivial abelian normal subgroup $A$ of $G$, we must have $m_G(A) = |A||C_G(A)| < |G| = m^*(G)$, i.e. $|G/C_G(A)| > |A|$.  So $G$ has Property A.

$\Rightarrow$) Suppose $G$ has Property A.  By Proposition 2 we know that the least element, $M$, of $\CD(G)$ is an abelian normal subgroup of $G$.  Since $G$ has Property A, we know that every non-trivial abelian normal subgroup $A$ of $G$ has that $m_G(A) = |A||C_G(A)| < |G| = m_G(1)$.  So it must be that $M = 1 \in \CD(G)$ and $m^*(G) = |G|$.  If $G$ itself is Chermak-Delgado simple then we are done.  Suppose not and let $A_1,\dots A_k$ be all of the atoms in $\CD(G)$ (the atoms are the elements $A_i \in \CD(G)$ so that $1 \prec A_i$).  Note that each $A_i \unlhd G$.  This is true because $C_G(A_i) \prec G$ and so by Proposition 4, $C_G(A_i) \unlhd G$.  And since $A_i = C_G(C_G(A_i))$, it follows that $A_i \unlhd G$.  Since $G$ has Property A, it follows that none of the atoms are abelian.  Now for any $A_i \neq A_j$, we have that $A_i \cap A_j = 1$, and so $[A_i , A_j] = 1$.  From here it follows that for any atom $A_i$, $A_i \cap (A_1 \cdots A_{i-1}A_{i+1} \cdots A_k) = 1$.  This is true because otherwise, as $A_i$ is an atom, we would have that $A_i \leq A_1 \cdots A_{i-1}A_{i+1} \cdots A_k \leq C_G(A_i)$, contradicting $A_i$ being nonabelian.  So the product $A_1 \cdots A_k$ is a direct product.  Finally note that $C_G(A_1 \times \dots \times A_k) = 1$ as otherwise an atom is abelian.  And so $A_1 \times \dots \times A_k = C_G(C_G(A_1 \times \dots \times A_k)) = C_G(1) = G$.  Finally, each $A_i$ is itself a Chermak-Delgado simple group as otherwise we would have an element of $\CD(G)$ between $1$ and $A_i$.
\end{proof}

A group $G$ is said to be \textbf{indecomposable} if $G$ cannot be written as a direct product $H \times K$ with $H \neq 1$ and $K \neq 1$.  A Chermak-Delgado simple group is clearly indecomposable by Proposition 3, and so we get the following corollary:

\begin{corollary} An indecomposable group $G$ has Property $A$ if and only if $G$ is a Chermak-Delgado simple group.
\end{corollary}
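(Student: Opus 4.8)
The plan is to deduce this directly from Theorem 1 together with the observation recorded immediately before the statement, namely that every Chermak-Delgado simple group is indecomposable as a consequence of Proposition 3. Since the structural work has already been carried out in Theorem 1, both implications are short, and the proof reduces to bookkeeping with the direct-product decomposition that Theorem 1 supplies.

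For the direction $\Leftarrow$, I would argue as follows. Suppose $G$ is Chermak-Delgado simple. Then $G$ is (trivially) a direct product of Chermak-Delgado simple groups, namely the one-factor product $G$, so Theorem 1 immediately yields that $G$ has Property A. Indecomposability of $G$ is exactly the remark citing Proposition 3 that precedes the corollary. This settles the backward implication with no further calculation.

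For the direction $\Rightarrow$, suppose $G$ is indecomposable and has Property A. By Theorem 1 we may write $G = A_1 \times \dots \times A_k$ with each $A_i$ Chermak-Delgado simple. Discarding any trivial factors, I may assume each $A_i \neq 1$; if every factor is trivial then $G = 1$, which I would treat as a degenerate case separately. Now indecomposability forces the number of (nontrivial) factors to be at most one: if $k \geq 2$ we could regroup the decomposition as $A_1 \times (A_2 \times \dots \times A_k)$, a direct product of two nontrivial groups, contradicting that $G$ is indecomposable. Hence $k = 1$ and $G = A_1$ is Chermak-Delgado simple.

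I anticipate no genuine obstacle here, as the content lives entirely in Theorem 1 and Proposition 3. The only point requiring care is ensuring the factors produced by Theorem 1 are nontrivial, so that indecomposability really does collapse the product to a single factor; this is handled cleanly by first deleting trivial factors. One should also note the degenerate group $G = 1$, which is vacuously both indecomposable and Chermak-Delgado simple (since $\CD(1) = \{1\}$), so that the stated equivalence holds there as well.
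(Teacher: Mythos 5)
Your proposal is correct and follows essentially the same route as the paper, which derives the corollary immediately from Theorem 1 together with the remark (via Proposition 3) that Chermak-Delgado simple groups are indecomposable. Your explicit handling of trivial factors and the collapse of the direct product to a single factor is just the bookkeeping the paper leaves implicit.
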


We illustrate the power of the above corollary by restating it using the definitions of Property A and Chermak-Delgado simple:

\begin{proposition} If an indecomposable group $G$ has the property that for every $1 < A \leq G$ with $A$ abelian and normal, $|G : C_G(A)| > |A|$, then $G$ has the property that for every $1 < H < G$, $|G : C_G(H)| > |H|$.
\end{proposition}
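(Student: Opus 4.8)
The plan is to reduce everything to Corollary 1 and then merely translate the conclusion of that corollary from the language of the Chermak-Delgado lattice into the language of centralizer indices. The hypothesis imposed on $G$ is precisely that $G$ is indecomposable and has Property A, so Corollary 1 applies immediately and tells us that $G$ is Chermak-Delgado simple, that is, $\CD(G) = \{1, G\}$. The entire content of the proposition is then the extraction of the stated inequality from this lattice description.

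First I would pin down the value of $m^*(G)$. Since $1 \in \CD(G)$, and $\CD(G)$ consists by definition exactly of the subgroups attaining the maximal measure $m^*(G)$, we get $m^*(G) = m_G(1) = |1|\,|C_G(1)| = |G|$. (If $G = 1$ the claimed statement is vacuous, so I may assume $G \neq 1$, in which case $\{1, G\}$ genuinely has two distinct elements.)

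Next I would take an arbitrary subgroup $H$ with $1 < H < G$. Because $H$ is neither $1$ nor $G$, we have $H \notin \CD(G)$, so $H$ fails to attain the maximal measure; since $m^*(G)$ is the maximum over \emph{all} subgroups, this forces the strict inequality $m_G(H) < m^*(G) = |G|$. Unwinding $m_G(H) = |H|\,|C_G(H)|$ gives $|H|\,|C_G(H)| < |G|$, and rearranging yields $|G : C_G(H)| > |H|$, exactly as required.

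There is no serious obstacle here, since the substantive work was already carried out in Corollary 1 (and ultimately in Theorem 1). The only point that requires a moment's care is the identification $m^*(G) = |G|$, as this is precisely what converts the purely lattice-theoretic fact ``$H \notin \CD(G)$'' into the quantitative index bound; everything else is a direct rewriting of the definitions of Property A and of Chermak-Delgado simplicity.
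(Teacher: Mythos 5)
Your proposal is correct and follows exactly the route the paper intends: the paper states this proposition as a direct restatement of Corollary 1, and your argument (apply Corollary 1 to get $\CD(G) = \{1, G\}$, note $m^*(G) = m_G(1) = |G|$, then observe that any $1 < H < G$ lies outside $\CD(G)$ and hence has strictly smaller measure) is precisely the unwinding of definitions that the paper leaves implicit. The identification $m^*(G) = |G|$ is indeed the one point needing care, and you handle it correctly.
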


Theorem 1 motivates a classification of Chermak-Delgado simple groups.  Of course the classification of nonabelian simple groups is a special case of the classification of all Chermak-Delgado simple groups.  Our focus is a classification of the solvable Chermak-Delgado simple groups, and we provide the first steps in that classification in the next section.

\section{Chermak-Delgado Simple Groups}

One of the first applications of Sylow theory is the ruling out of potential group orders for simple groups.  We begin the same way in our study of Chermak-Delgado simple groups.  It turns out that the identity subgroup and $S_4$ are the only Chermak-Delgado simple groups having orders between 1 and 50.  There are fast and fun ways to rule out all other orders.  For example, groups of order 36 or 45 will have a Sylow $9$-subgroup, which must be abelian, and hence has Chermak-Delgado measure at least 81, which is larger than the group order.  Order 40 is a bit tricky to rule out.  The Sylow $5$-subgroup is normal, and so a Sylow $2$-subgroup acts on it.  A group of order 5 has automorphism group of order 4, so there is a non-trivial kernel to this action.  Thus the Sylow $5$-subgroup will have measure at least 50, which is larger than 40.  The hardest to rule out is 48.  It is true that any group of order 16 has an abelian subgroup of order 8, and so the abelian subgroup will have measure at least 64, which is larder than 48.

We'd like to eliminate a large chunk of orders at once.  Suppose $G$ is a group  having one ``very large prime'' in its group order.  Formally, suppose $|G| = mp^k$ with $p$ a prime, $k \geq 1$, and $m < p$.  Then a Sylow $p$-subgroup, $P$, of $G$, will have that $m_G(P) > |G|$ as $Z(P) > 1$.  So groups of such order cannot be Chermak-Delgado simple.  We wondered if we could do any better than that.  

Suppose $G$ is a group having one ``very large prime'' $p$ in its group order, where $p$ is ``not too large''.  Formally, suppose $|G| = mp^k$ with $p$ a prime, $k \geq 1$, and $m$ a natural number so that $m/q < p < m$ where $q$ is the smallest prime divisor of $m$.  

This is the scenario that we will focus on for the rest of this paper.  It splits into two cases.  The first of which is when $1 < m/q$.  In that case we establish that a group, $G$, of such order cannot be Chermak-Delgado simple.  The other case is when $m=q$, i.e. $|G| = qp^k$ with $q < p$.  In that case we classify all Chermak-Delgado simple groups of such order.  Note that $S_4$ is included in this classification as $24=3 \cdot 2^3$.

\subsection{The case when $1 < m/q$.}

\begin{proposition}\label{prop2.1}
Suppose $G$ is a finite group and $m^*(G)=|G|$.  Then $G$ contains no non-trivial, normal, cyclic subgroups.
\end{proposition}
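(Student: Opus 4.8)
The plan is to argue by contradiction, exploiting the elementary but decisive fact that the automorphism group of a non-trivial cyclic group is strictly smaller than the group itself. First I would unpack the hypothesis: $m^*(G) = |G|$ means that for \emph{every} $H \leq G$ we have $m_G(H) = |H|\,|C_G(H)| \leq |G|$, equivalently $|G : C_G(H)| \leq |G|/|H|$. This is the only consequence of the hypothesis the argument needs.

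Next, suppose for contradiction that $C$ is a non-trivial normal cyclic subgroup of $G$, say $|C| = n > 1$. Since $C$ is abelian we have $C \leq C_G(C)$, and since $C \unlhd G$ the conjugation action of $G$ on $C$ is well defined with kernel precisely $C_G(C)$. Hence $G/C_G(C)$ embeds into $Aut(C)$, giving $|G : C_G(C)| \leq |Aut(C)|$.

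The key step is the numerical comparison. Because $C$ is cyclic of order $n$, $Aut(C)$ is isomorphic to the unit group of $\mathbb{Z}/n\mathbb{Z}$, so $|Aut(C)| = \varphi(n)$, and for every $n > 1$ one has $\varphi(n) \leq n-1 < n = |C|$. Combining this with the embedding yields $|G : C_G(C)| < |C|$, i.e. $|G| < |C|\,|C_G(C)| = m_G(C)$. This directly contradicts the bound $m_G(C) \leq m^*(G) = |G|$ from the first step, completing the proof.

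I do not anticipate a genuine obstacle here: the argument is short and its only substantive input is the strict inequality $\varphi(n) < n$ for $n > 1$, which is exactly what forces a normal cyclic subgroup to have Chermak-Delgado measure exceeding $|G|$. The point worth emphasizing is that this same mechanism explains why the least element of $\CD(G)$ can never be cyclic, foreshadowing how the proposition will be used to constrain the structure of candidate Chermak-Delgado simple groups in the case $1 < m/q$.
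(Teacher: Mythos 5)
Your proof is correct, and it takes a genuinely different (though closely related) route from the paper's. You bound $|G:C_G(C)|$ via the embedding $G/C_G(C) \hookrightarrow Aut(C)$ and the fact that $|Aut(C)| = \varphi(n) \leq n-1 < n = |C|$, yielding $m_G(C) = |C||C_G(C)| > |G| = m^*(G)$, a contradiction. The paper instead works with a generator $x$ of the normal cyclic subgroup and applies the orbit--stabilizer theorem to its conjugacy class: $|G| = |x^G||C_G(x)|$, and normality forces $x^G \subseteq \langle x \rangle$ while $1 \notin x^G$, so $|x^G| < |\langle x \rangle|$; since $C_G(x) = C_G(\langle x \rangle)$ (this is where cyclicity enters for the paper), it follows that $m_G(\langle x \rangle) > |G|$. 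Both arguments pivot on the same strict inequality $|G:C_G(C)| < |C|$, but the paper's version needs no facts about automorphism groups, only a count of conjugates, while yours buys a cleaner generalization: any non-trivial abelian normal subgroup $A$ with $|Aut(A)| < |A|$ has measure exceeding $|G|$, which dovetails with the paper's phrasing of Property A as an embedding condition into $Aut(A)$. One small caveat about your closing remark: the least element of $\CD(G)$ \emph{can} be cyclic in general (if $G$ is cyclic, then $\CD(G) = \{G\}$); your mechanism rules this out only under the standing hypothesis $m^*(G) = |G|$, in which case the least element is the trivial subgroup anyway.
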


\begin{proof}
Suppose $\langle x \rangle \unlhd G$ with $x \in G$ nonidentity.  It follows from the orbit-stabilizer theorem that $|G| = |x^G||C_G(x)|$.  Notice that $x^G$ is properly contained in $\langle x \rangle$, as $\langle x \rangle \unlhd G$ and $ 1 \notin x^G$.  So $m_G(\langle x \rangle) = |\langle x \rangle||C_G(\langle x \rangle)| > |x^G||C_G(x)| = |G| = m^*(G)$, a contradiction.
\end{proof}

\begin{lemma}\label{lem2.4}
Suppose $G$ is a group with $|G| = mp^k$ with $p$ a prime, $k \geq 1$, and $m$ a natural number so that $m/q < p < m$ where $q$ is the smallest prime divisor of $m$.  Then a Sylow $p$-subgroup of $G$ is either normal or self-normalizing.
\end{lemma}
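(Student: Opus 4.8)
The plan is to count Sylow $p$-subgroups and exploit the two competing Sylow constraints against the arithmetic hypothesis $m/q < p < m$. Let $P$ be a Sylow $p$-subgroup of $G$ and let $n_p = |G : N_G(P)|$ be the number of Sylow $p$-subgroups. First I would record that the hypotheses force $p \nmid m$: if $p \mid m$, writing $m = pm'$ the inequality $m/q < p$ gives $m'/q < 1$, so $m' < q$; but $m'$ is a divisor of $m$ and hence either $m' = 1$ (forcing $m = p$ and contradicting $p < m$) or $m'$ has a prime factor at least $q$ (contradicting $m' < q$). Thus $|P| = p^k$ is the full $p$-part of $|G|$, and by Sylow's theorem $n_p \mid m$ and $n_p \equiv 1 \pmod{p}$.

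The key elementary observation is that the largest proper divisor of $m$ is exactly $m/q$: if $d \mid m$ with $d < m$, then $m/d > 1$ has a prime factor, necessarily at least $q$, so $m/d \geq q$ and hence $d \leq m/q$. I would combine this with the congruence condition as follows. If $n_p = 1$, then $P \unlhd G$ and we are done. Otherwise $n_p > 1$, and since $n_p \equiv 1 \pmod p$ we get $n_p \geq p + 1 > p$. By the hypothesis $p > m/q$, this yields $n_p > m/q$.

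Now the divisor $n_p$ of $m$ strictly exceeds the largest \emph{proper} divisor $m/q$, so the only possibility left is $n_p = m$. This gives $|N_G(P)| = |G|/n_p = mp^k/m = p^k = |P|$, and since $P \leq N_G(P)$ we conclude $N_G(P) = P$, i.e. $P$ is self-normalizing. Thus in every case $P$ is either normal or self-normalizing.

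I do not expect a genuine obstacle here: the argument is a direct dichotomy driven by the two Sylow conditions. The one subtlety worth stating carefully is the ``largest proper divisor equals $m/q$'' fact, since it is precisely what converts the strict inequality $n_p > m/q$ into the equality $n_p = m$; everything else is routine Sylow counting, and verifying $p \nmid m$ at the outset is what guarantees $n_p$ actually divides $m$ rather than some larger factor of the order.
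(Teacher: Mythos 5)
Your proof is correct and follows essentially the same route as the paper's: Sylow counting gives $n_p \equiv 1 \pmod p$ and $n_p \mid m$, so a non-normal Sylow $p$-subgroup forces $n_p > p > m/q$, and since $m/q$ is the largest proper divisor of $m$, we get $n_p = m$, i.e.\ self-normalizing. Your proposal is in fact slightly more careful than the paper, since you explicitly verify $p \nmid m$ (so that $n_p$ really divides $m$) and spell out the largest-proper-divisor observation, both of which the paper leaves implicit.
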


\begin{proof}
Suppose a Sylow $p$-subgroup of $G$ is not normal.  Let $n_p$ denote the number of Sylow $p$-subgroups of $G$.  Then $n_p > p$ and $n_p$ divides $m$.  As $m/q < p < m$ and $q$ is the smallest prime divisor of $m$, it follows that $n_p = m$, and so a Sylow $p$-subgroup of $G$ is self-normalizing.
\end{proof}

The next lemma will be applied to the case when $k=1$ in Theorem~\ref{thm2.3}.

\begin{lemma}\label{lem2.3}
Suppose $G$ is a group with $|G| = mp$ with $p$ a prime and $m$ a natural number so that $m/q < p < m$ where $q$ is the smallest prime divisor of $m$.  Suppose that a Sylow $p$-subgroup of $G$ is not normal.  Then $G$ possesses a characteristic, abelian $p$-complement.
\end{lemma}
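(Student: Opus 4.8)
The plan is to obtain the $p$-complement from Burnside's transfer theorem and then to force it to be abelian using the tight bound $m/q<p$. First note that $|P|=p$ for $P\in\mathrm{Syl}_p(G)$: if instead $p\mid m$, write $m=pt$ with $t=m/p>1$; then $t$ has a prime factor $s\ge q$, so $m=pt\ge pq$ and $m/q\ge p$, contradicting the hypothesis. Hence $\gcd(m,p)=1$ and $|P|=p$. Since $P$ is not normal, Lemma~\ref{lem2.4} gives $N_G(P)=P$, and as $P$ is abelian of prime order, $C_G(P)=N_G(P)=P$. Burnside's normal $p$-complement theorem then yields a normal $p$-complement $H$, with $H\unlhd G$ and $|H|=m$. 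A normal Hall $p'$-subgroup is the unique Hall $p'$-subgroup of $G$ and is therefore characteristic, so $G=H\rtimes P$ and it remains only to show $H$ is abelian.

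Next I would use that $P$ acts fixed-point-freely on $H$: since $C_H(P)=H\cap C_G(P)=H\cap P=1$ and $P$ has prime order, every non-identity element of $P$ generates $P$ and hence fixes no non-identity element of $H$; so every orbit of $P$ on $H\setminus\{1\}$ has size $p$. I claim $H$ is a $q$-group. Because the action of $P$ on $H$ is coprime, each prime $r\mid m$ admits a $P$-invariant Sylow $r$-subgroup $R$, on which $P$ still acts fixed-point-freely (as $C_R(P)\le C_H(P)=1$); the orbit count then gives $p\mid(|R|-1)$. If more than one prime divided $m$, such an $R$ would be proper, so $|R|$ would be a proper divisor of $m$ and hence $|R|\le m/q<p$, forcing $0<|R|-1<p$ and contradicting $p\mid(|R|-1)$. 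Therefore $m=q^{a}$ and $H$ is a $q$-group with $q^{a-1}=m/q<p<q^{a}=m$.

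Finally I would pin down the multiplicative order $d$ of $q$ modulo $p$. From $p\mid(q^{a}-1)$ we get $d\mid a$, while $q^{d}\equiv1\pmod p$ with $q^{d}>1$ gives $q^{d}\ge p+1>p>q^{a-1}$, whence $d\ge a$ and so $d=a$. If $H$ were non-abelian, then $Z(H)$ would be a proper, non-trivial, characteristic (hence $P$-invariant) subgroup of order $q^{b}$ with $1\le b<a$; applying the orbit count to $Z(H)$ gives $p\mid(q^{b}-1)$, so $d\mid b$, contradicting $d=a>b$. Hence $H$ is abelian, which completes the proof.

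I expect the abelian conclusion to be the crux. Producing a normal $p$-complement is routine once $P$ is self-normalizing, but upgrading it to an abelian complement is precisely where the hypothesis $m/q<p$ is used at full strength --- first to eliminate all but one prime divisor of $m$, and then, via the coincidence $q^{a-1}<p<q^{d}$, to pin $\mathrm{ord}_p(q)$ to the full exponent $a$ and thereby rule out a non-trivial center quotient.
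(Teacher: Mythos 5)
Your proof is correct, but it takes a genuinely different route from the paper's. The paper never invokes transfer theory: it counts the $m(p-1)$ elements of order $p$ (using Lemma~\ref{lem2.4} and the trivial pairwise intersections of the $m$ Sylow $p$-subgroups), lets $X$ be the complementary set of $m$ elements, and shows $C_G(x)=X$ for every nonidentity $x\in X$ --- the key point being that otherwise $|x^G|=|G|/|C_G(x)|$ would be a multiple $rp$ with $r\ge q$, so $|x^G|\ge qp>m=|X|$, impossible since $x^G\subseteq X$. That single equality makes $X$ simultaneously a subgroup, a $p$-complement, characteristic (being invariant under all automorphisms), and abelian, all in one stroke. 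You instead get existence from Burnside's normal $p$-complement theorem (legitimate, since Lemma~\ref{lem2.4} gives $N_G(P)=P$ and hence $N_G(P)=C_G(P)=P$), and then do separate work for abelianness: the fixed-point-free action of $P$ on $H$, a $P$-invariant Sylow $r$-subgroup for each prime $r\mid m$ (valid by Glauberman's lemma, as $P$ is solvable, or by a Frattini argument), the bound $m/q<p$ to force $m=q^a$, and the multiplicative order of $q$ modulo $p$ to rule out a proper nontrivial center. Your argument buys strictly more structural information --- it shows that under these hypotheses $m$ must be a prime power $q^a$ with $\mathrm{ord}_p(q)=a$, which the paper's proof does not reveal --- and your opening observation that $p\nmid m$ makes explicit a point the paper leaves implicit. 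What the paper's argument buys is economy and self-containedness: it is entirely elementary (no transfer theorem, no coprime-action machinery) and delivers the characteristic abelian complement directly as a centralizer. Both proofs lean on Lemma~\ref{lem2.4} at the same spot, namely to make the Sylow $p$-subgroup self-normalizing.
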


\begin{proof}
A Sylow $p$-subgroup of $G$ is cyclic of order $p$, and so any two distinct Sylow $p$-subgroups of $G$ have trivial intersection.  By Lemma~\ref{lem2.4}, $|{Syl}_p(G)| = |G:P| = m$, where $P \in {Syl}_p(G)$.  Let $X$ be the set of all elements of $G$ that are not conjugate in $G$ to any nonidentity element in $P$.  Then $|X| = |G| - (|G:P|)(|P| - 1) = mp - m(p-1) = m$ and the set $X$ is invariant under any automorphism of $G$.  Let $1 \neq x \in X$ be arbitrary.  Now $|C_G(x)|$ is coprime to $p$, as otherwise $x$ centralizes a Sylow $p$-subgroup of $G$ which is self-normalizing, a contradiction.   As $X$ contains all of the elements of $G$ of order coprime to $p$, we have that $C_G(x) \subseteq X$.  We show that $C_G(x) = X$.  Otherwise, if $|C_G(x)| < m$, then $|x^G| = rp$ for some $r \geq q$.  One then sees that $|x^G| = rp \geq qp > m$, which is a contradiction as $x^G \subseteq X$ and $|X| = m$.  So $C_G(x) = X$, and $X$ is a subgroup.  So $X$ is a characteristic $p$-complement for $G$, and $C_G(x) = X$ was true for arbitrary $1 \neq x \in X$, and so $X$ is abelian.
\end{proof}

For the next lemma it is essential that $1 < m/q$.

\begin{lemma} \label{lem2.6}
Suppose $G$ is a group with $|G| = mp^k$ with $p$ a prime, $k \geq 1$, and $m$ a natural number so that $1 < m/q < p < m$ where $q$ is the smallest prime divisor of $m$.  If $P \neq R \in {Syl}_p(G)$, then $G = \langle P , R \rangle$, and $P \cap R = O_p(G)$ has order $p^{k-1}$.
\end{lemma}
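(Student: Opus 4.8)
The plan is to treat the two assertions separately, first establishing $G = \langle P, R \rangle$ by a Sylow-counting argument, and then pinning down $P \cap R$ via the conjugation action of $P$ on the set of Sylow $p$-subgroups.

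For the generation statement, I would set $H = \langle P, R \rangle$. Since $P$ and $R$ are distinct Sylow $p$-subgroups of $G$ lying in $H$, and $p^k$ divides $|H|$, both are Sylow $p$-subgroups of $H$; hence $H$ has more than one Sylow $p$-subgroup and so $n_p(H) \geq p+1 > p$. Writing $|H| = p^k t$, Lagrange gives $t \mid m$ and $n_p(H) \mid t \mid m$. The key arithmetic input is that the largest proper divisor of $m$ is $m/q$, which by hypothesis satisfies $m/q < p$; thus the only divisor of $m$ exceeding $p$ is $m$ itself, forcing $n_p(H) = m$, whence $t = m$ and $H = G$.

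For the intersection statement, I would first note that because distinct Sylow $p$-subgroups exist, Lemma~\ref{lem2.4} makes every Sylow $p$-subgroup self-normalizing, so $N_G(S) = S$ for each $S \in {Syl}_p(G)$ and $n_p = m$. Letting $P$ act by conjugation on $\Omega = {Syl}_p(G)$, the orbit of any $R \neq P$ has size $|P : P \cap N_G(R)| = |P : P \cap R|$, a power of $p$ strictly larger than $1$ (otherwise $P \leq N_G(R) = R$ would force $P = R$). The crucial step is the bound $m < p^2$: since $m/q$ is an integer greater than $1$ dividing $m$ with all prime factors at least $q$, we get $m/q \geq q$, hence $m \geq q^2$ and $q < p$, so $m < pq < p^2$. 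Consequently every nontrivial $P$-orbit on $\Omega$ has size less than $p^2$ and therefore equals exactly $p$, which yields $|P : P \cap R| = p$, i.e.\ $|P \cap R| = p^{k-1}$ for every $R \neq P$.

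Finally, since $P \cap R$ has index $p$ in both $P$ and $R$, it is normal in each, so $\langle P, R \rangle = G$ normalizes $P \cap R$; thus $P \cap R$ is a normal $p$-subgroup, giving $P \cap R \leq O_p(G)$, while $O_p(G) \leq P \cap R$ holds automatically, so $P \cap R = O_p(G)$ has order $p^{k-1}$. I expect the main obstacle to be packaging the arithmetic cleanly — in particular justifying $m < p^2$ — and this is exactly where the hypothesis $1 < m/q$ (that $m$ is not prime) is indispensable: without it one cannot force $q < p$, orbits of size $p^2$ become possible, and the whole argument collapses.
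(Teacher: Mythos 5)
Your proof is correct, but it takes a genuinely different route from the paper's, and in the reverse order. The paper first determines the intersection using the product-set formula: $|PR| = p^{2k}/|P \cap R|$, so $|P \cap R| \leq p^{k-2}$ would give $|PR| \geq p^2p^k > pqp^k > mp^k = |G|$, impossible for a subset of $G$; hence $|P \cap R| = p^{k-1}$ and $|PR| = p^{k+1}$. Generation then follows because $|\langle P,R \rangle|$ is a multiple of $p^k$ dividing $mp^k$ that strictly exceeds $p^{k+1}$, and the divisor arithmetic you also use (every proper divisor of $m$ is at most $m/q < p$) forces $|\langle P,R \rangle| = mp^k$. The closing step --- index $p$ in both Sylow subgroups, hence normal in each, hence normal in their join $G$ --- is identical in the two proofs. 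By contrast, you prove generation first via Sylow's congruence ($n_p(H) \geq p+1$ together with $n_p(H) \mid m$), and the intersection second via orbit-stabilizer for the conjugation action of $P$ on $\mathrm{Syl}_p(G)$, invoking Lemma~\ref{lem2.4} for self-normalization. Both proofs consume exactly the same arithmetic ($q \leq m/q < p$, hence $m < pq < p^2$, and $m/q$ is the largest proper divisor of $m$), so neither is more general; what yours buys is that the two assertions are established independently of one another (the paper's generation step relies on first knowing $|PR| = p^{k+1}$), plus slightly finer information, namely that the $m-1$ Sylow $p$-subgroups other than $P$ fall into $P$-orbits of size exactly $p$; what the paper's buys is economy, since it needs only the containment $PR \subseteq G$ and never invokes Lemma~\ref{lem2.4}, Sylow's congruence, or a group action. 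A minor streamlining of your argument: self-normalization is not actually needed to identify the stabilizer, since $P \cap N_G(R) = P \cap R$ holds for any two Sylow $p$-subgroups ($R$ being the unique Sylow $p$-subgroup of its own normalizer), and $n_p \mid m$ already gives $|\Omega| \leq m < p^2$ without computing $n_p$ exactly.
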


\begin{proof}
As $1 < m/q$ and $q$ is the smallest prime divisor of $m$, we have that $q \leq m/q < p$.  So $q$ is the smallest prime divisor of $|G|$ and $p$ is the largest prime divisor of $|G|$.

Suppose $P \neq R \in {Syl}_p(G)$.  Notice that the product $PR$ has cardinality equal to $p^{2k} / |P \cap R|$.  If $|P \cap R| < p^{k-1}$, then $|PR| \geq p^2 p^k > p q p^k > mp^k = |G|$.  This is a contradiction as $PR$ is a subset of $G$.  So $|P \cap R| = p^{k-1}$ and $|PR| = p^{k+1}$.  

Consider the join $\langle P,R \rangle$.  The product $PR \subset \langle P,R \rangle$, and so $|\langle P,R \rangle| > p^{k+1}$.  Observe that $p^k$ divides $|\langle P,R \rangle|$ which divides $mp^k$.  Now as $m/q < p < m$ where $q$ is the smallest prime divisor of $m$, it must be that $|\langle P,R \rangle| = mp^k$, and so $\langle P,R \rangle = G$.  Now $P \cap R$ has index $p$ in both $P$ and $R$, and so $P \cap R \unlhd P$ and $P \cap R \unlhd R$, and so $P \cap R \unlhd \langle P,R \rangle = G$.  So $O_p(G) = P \cap R$. 
\end{proof}

The direct product of the Frobenius group of order $56$ with any $7$-group is a nice example to have in mind when thinking through Lemma~\ref{lem2.6}.

This next lemma is a consequence of Dickson's Theorem\cite{Dic01} which classifies the subgroups of $PSL(2,q)$ where $q$ a power of a prime.  See Huppert\cite{Hup67} for a good presentation of Dickson's Theorem.  However, one does not need Dickson's Theorem to prove Lemma~\ref{lem2.7}.  We state the lemma here and leave an elementary proof to the reader.

\begin{lemma}\label{lem2.7}
Suppose $q$ is a power of a prime $p$.  Then $GL(2,q)$ contains $q+1$ Sylow $p$-subgroups.  If $S \neq T \in {Syl}_p(GL(2,q))$, then $S \cap T = 1$ and $\langle S , T \rangle = SL(2,q)$.  
\end{lemma}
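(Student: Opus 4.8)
The plan is to work entirely inside $G = GL(2,q)$ with $q = p^f$, using nothing beyond Sylow theory and elementary linear algebra. First I would record the order $|GL(2,q)| = (q^2-1)(q^2-q) = q(q-1)^2(q+1)$, so that the $p$-part of $|G|$ is exactly $q$ and every Sylow $p$-subgroup has order $q$. The natural candidate is the group $U$ of upper unitriangular matrices $\begin{pmatrix} 1 & b \\ 0 & 1\end{pmatrix}$, $b \in \mathbb{F}_q$, which has order $q$ and hence is Sylow. To count the Sylow $p$-subgroups I would compute $N_G(U)$. Since $U$ is precisely the set of unipotent elements of $G$ stabilizing the line $\langle e_1 \rangle$, conjugation by $g$ carries $U$ to the unipotent matrices stabilizing $g\langle e_1 \rangle$; thus $gUg^{-1} = U$ exactly when $g$ stabilizes $\langle e_1\rangle$, i.e. $g$ is upper triangular. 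Hence $N_G(U) = B$, the Borel subgroup of all upper triangular matrices, with $|B| = q(q-1)^2$, and so the number of Sylow $p$-subgroups is $[G:B] = q+1$. Equivalently, $S \mapsto \mathrm{Fix}(S)$ sets up a bijection between $\mathrm{Syl}_p(G)$ and the $q+1$ points of the projective line $\mathbb{P}^1(\mathbb{F}_q)$.

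For the intersection I would use this fixed-line description. Each $S \in \mathrm{Syl}_p(G)$ is conjugate to $U$, and every element of $U$ fixes the line $\langle e_1\rangle$ pointwise; hence every element of $S$ fixes the line $L_S := \mathrm{Fix}(S)$ pointwise, and distinct Sylow subgroups correspond to distinct lines. If $S \neq T$ and $g \in S \cap T$, then $g$ fixes the two distinct (hence independent) lines $L_S$ and $L_T$ pointwise, so $g$ fixes a basis of $\mathbb{F}_q^2$ and therefore $g = 1$. This gives $S \cap T = 1$.

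Finally, for $\langle S, T\rangle = SL(2,q)$, I would first note the inclusion: $U \le SL(2,q)$ and $SL(2,q) \unlhd GL(2,q)$, so every conjugate of $U$ lies in $SL(2,q)$, whence $\langle S, T\rangle \le SL(2,q)$. To prove equality I would reduce to a standard position: since distinct lines $L_S, L_T$ are independent, choosing a basis $v_1 \in L_S$, $v_2 \in L_T$ and conjugating by the change-of-basis matrix sending $v_1 \mapsto e_1$, $v_2 \mapsto e_2$ carries $S$ to $U$ and $T$ to the lower unitriangular group $U^-$. As conjugation is an automorphism fixing $SL(2,q)$ setwise, it suffices to show $\langle U, U^-\rangle = SL(2,q)$. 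This is the heart of the argument and the main obstacle, and I would establish it by Gaussian elimination: left- and right-multiplication by the elementary matrices $x(t) = \begin{pmatrix} 1 & t \\ 0 & 1\end{pmatrix} \in U$ and $y(t) = \begin{pmatrix} 1 & 0 \\ t & 1\end{pmatrix} \in U^-$ performs the row and column operations that reduce any $g \in SL(2,q)$ to a diagonal matrix $\begin{pmatrix} \lambda & 0 \\ 0 & \lambda^{-1}\end{pmatrix}$, while the identity $x(\lambda)\,y(-\lambda^{-1})\,x(\lambda) = \begin{pmatrix} 0 & \lambda \\ -\lambda^{-1} & 0\end{pmatrix}$ lets one realize every such diagonal matrix as a product of two of these inside $\langle U, U^-\rangle$. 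Hence every $g \in SL(2,q)$ is a product of elements of $U$ and $U^-$, giving $\langle U, U^-\rangle = SL(2,q)$ and therefore $\langle S, T\rangle = SL(2,q)$ for all distinct $S,T$.
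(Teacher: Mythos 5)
Your proof is correct, and it fills a gap the paper deliberately leaves open: the author states this lemma without proof, remarking that it follows from Dickson's classification of the subgroups of $PSL(2,q)$ but that no such machinery is needed, and explicitly ``leaves an elementary proof to the reader.'' Your argument is precisely that elementary proof, so there is no proof of record to compare against --- only the heavyweight alternative the paper cites. The trade-off between the two routes is clear: Dickson's Theorem yields the lemma (and the paper's Corollary~2 along with it) as a byproduct of a complete subgroup classification, but it is a deep result; your argument is self-contained Sylow theory plus linear algebra. Each step checks out: computing $N_{GL(2,q)}(U)=B$, the upper-triangular Borel subgroup, gives the count $[GL(2,q):B]=q+1$; the resulting bijection $S \mapsto \mathrm{Fix}(S)$ with the projective line makes $S \cap T = 1$ immediate, since an element fixing two independent lines pointwise fixes a basis; and after moving the two fixed lines to the coordinate axes, generation reduces to the standard fact $\langle U, U^{-}\rangle = SL(2,q)$, which your Gaussian elimination together with the identity $x(\lambda)\,y(-\lambda^{-1})\,x(\lambda)=\left(\begin{smallmatrix} 0 & \lambda \\ -\lambda^{-1} & 0 \end{smallmatrix}\right)$ correctly establishes (the elementary matrices have determinant $1$, so elimination terminates at $\mathrm{diag}(\lambda,\lambda^{-1})$, which is a product of two such antidiagonal elements). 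The only compressed point is the claim that $gUg^{-1}=U$ forces $g$ to stabilize $\langle e_1 \rangle$; this is justified because no nonidentity element of $U$ stabilizes any line other than $\langle e_1 \rangle$, and is worth a sentence in a final write-up.
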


This leads to the following corollary:

\begin{corollary} \label{cor2.5}
Suppose $q$ is a power of a prime $p$.  Suppose $H$ is a subgroup of $GL(2,q)$ so that $q$ divides $|H|$.  Then either $H$ has a normal Sylow $p$-subgroup, or $H$ contains $SL(2,q)$.
\end{corollary}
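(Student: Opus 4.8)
The plan is to derive the corollary from Lemma~\ref{lem2.7} by first pinning down the $p$-part of $|GL(2,q)|$. I would begin by computing $|GL(2,q)| = (q^2-1)(q^2-q) = q(q-1)^2(q+1)$. Since $q = p^n$ for some $n \geq 1$, while both $q-1$ and $q+1$ are coprime to $p$, the $p$-part of $|GL(2,q)|$ is exactly $q$. Hence a Sylow $p$-subgroup of $GL(2,q)$ has order precisely $q$.

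The crucial bridge comes next. Because $q$ divides $|H|$ and $q$ is already the full $p$-part of the ambient group order, any Sylow $p$-subgroup $S$ of $H$ has order $q$ and is therefore a Sylow $p$-subgroup of $GL(2,q)$ as well. This identification is what allows me to apply Lemma~\ref{lem2.7} — whose hypotheses concern Sylow $p$-subgroups of the full group $GL(2,q)$ — directly to the Sylow $p$-subgroups of $H$.

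With that in hand, the argument splits on the number of Sylow $p$-subgroups of $H$. If $H$ has a unique Sylow $p$-subgroup, then it is normal in $H$ and the first alternative of the corollary holds. Otherwise $H$ contains two distinct Sylow $p$-subgroups $S \neq T$, each of order $q$ and hence each a Sylow $p$-subgroup of $GL(2,q)$. Lemma~\ref{lem2.7} then yields $\langle S, T \rangle = SL(2,q)$; but $S, T \leq H$ forces $\langle S, T \rangle \leq H$, so $SL(2,q) \leq H$ and the second alternative holds.

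I do not expect a genuine obstacle, as the content is essentially a repackaging of Lemma~\ref{lem2.7} once the correct Sylow subgroups are matched up. The single point demanding care is the verification that a Sylow $p$-subgroup of $H$ is in fact a Sylow $p$-subgroup of the full group $GL(2,q)$; this rests entirely on the order computation showing that the $p$-part of $|GL(2,q)|$ equals $q$ exactly. Without it, distinct Sylow $p$-subgroups of $H$ could fail to be full Sylow $p$-subgroups of $GL(2,q)$ and would thus fall outside the scope of Lemma~\ref{lem2.7}.
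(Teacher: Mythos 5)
Your proof is correct and follows essentially the same route as the paper: identify a Sylow $p$-subgroup of $H$ as a full Sylow $p$-subgroup of $GL(2,q)$, then apply Lemma~\ref{lem2.7} to two distinct such subgroups when normality fails. The only difference is that you explicitly verify the order computation $|GL(2,q)| = q(q-1)^2(q+1)$ showing the $p$-part is exactly $q$, a detail the paper leaves implicit.
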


\begin{proof}
As $q$ divides $|H|$, a Sylow $p$-subgroup of $H$ is also a Sylow $p$-subgroup of $GL(2,q)$, and so if $H$ contains at least two distinct Sylow $p$-subgroups of $GL(2,q)$, then by Lemma~\ref{lem2.7}, $H$ contains $SL(2,q)$.
\end{proof}

Now we are ready to prove our main theorem.

\begin{theorem} \label{thm2.3}
Suppose $G$ is a group with $|G| = mp^k$ with $p$ a prime, $k \geq 1$, and $m$ a natural number so that $1 < m/q < p < m$ where $q$ is the smallest prime divisor of $m$.  Then $m^*(G) > |G|$ and so $G$ is not Chermak-Delgado simple.
\end{theorem}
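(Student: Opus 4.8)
The plan is to argue by contradiction: assume $m^*(G) = |G|$ and produce a subgroup of strictly larger Chermak-Delgado measure. Since $m_G(1) = |G|$ we always have $m^*(G) \ge |G|$, so it suffices to exhibit one subgroup $H$ with $m_G(H) > |G|$. By Proposition \ref{prop2.1} the assumption $m^*(G) = |G|$ forces $G$ to have no nontrivial normal cyclic subgroup, a fact I will use repeatedly. I would first record the numerical consequence $p^2 > m$ of the hypotheses: since $m/q > 1$ and $q$ is the least prime divisor of $m$ we get $m/q \ge q$, hence $p > m/q \ge q$, and therefore $p^2 > pq > m$. The bulk of the work is to locate a nontrivial abelian normal subgroup $N$ with $|N| > |G : C_G(N)|$, because for such $N$ one has $m_G(N) = |N|\,|C_G(N)| = |G|\cdot |N|/|G : C_G(N)| > |G|$, the desired contradiction.

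The main case is when $O_p(G) \ne 1$. Here I would choose $N$ to be a minimal normal subgroup of $G$ contained in $O_p(G)$; such $N$ is elementary abelian, say $|N| = p^e$, with $e \ge 2$ (else $N$ is a nontrivial normal cyclic subgroup). Set $H = G/C_G(N) \hookrightarrow \mathrm{Aut}(N) \cong GL(e,p)$, so $N$ is a faithful irreducible $\mathbb{F}_p[H]$-module. A nontrivial normal $p$-subgroup of $H$ has nonzero, hence by irreducibility full, fixed points on $N$ and so acts trivially, contradicting faithfulness; the same fixed-point observation shows $O_p(G) \le C_G(N)$. Thus if the Sylow $p$-subgroup of $G$ is not normal, Lemma \ref{lem2.6} gives $|O_p(G)| = p^{k-1}$ and the $p$-part of $|H|$ is at most $p$, while if it is normal then $O_p(G) = P$ and $H$ is a $p'$-group. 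Writing $|H| = p^a d$ with $p \nmid d$ and $a \le 1$, the factor $d$ divides $m$, and the whole problem reduces to proving $p^e = |N| > |H| = p^a d$.

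For $e \ge 3$ this is immediate from the crude estimate $|H| \le p\,d \le p\,m < p\cdot p^2 = p^3 \le p^e$. The only delicate case is $e = 2$, which is where I expect the real obstacle to lie and where Corollary \ref{cor2.5} enters. If $p \nmid |H|$ then $|H| = d \le m < p^2 = |N|$ and we are done, so suppose $p \mid |H|$. Applying Corollary \ref{cor2.5} to $H \le GL(2,p)$ (with the prime power taken to be $p$), either $H$ contains $SL(2,p)$ or $H$ has a normal Sylow $p$-subgroup. The first alternative forces $p^2 - 1 = |SL(2,p)|_{p'}$ to divide $m$, so $m \ge p^2 - 1$; combined with $m < p^2$ this gives $m = p^2 - 1$, which I would rule out by checking it violates $1 < m/q < p$ (for $p$ odd, $q = 2$ and $(p^2-1)/2 \ge p$; for $p = 2$, $m/q = 1$). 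The second alternative gives a nontrivial normal $p$-subgroup of $H$, contradicting the faithful-irreducibility observation above. Hence $p \mid |H|$ cannot occur when $e = 2$, and again $|N| > |H|$.

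Finally, the case $O_p(G) = 1$ remains; by Lemma \ref{lem2.6} this can only happen when $k = 1$ and the Sylow $p$-subgroup is not normal, so $|G| = mp$. Here I would invoke Lemma \ref{lem2.3}: $G$ has a characteristic abelian $p$-complement $Y$ of order $m$ with $C_G(y) = Y$ for every nontrivial $y \in Y$, so $C_G(Y) = Y$ and $G$ is a Frobenius group with kernel $Y$. Then $m_G(Y) = |Y|\,|C_G(Y)| = m^2 > mp = |G|$, using $m > p$. In every case we have produced a subgroup of measure exceeding $|G|$, so $m^*(G) > |G|$ and $G$ is not Chermak-Delgado simple. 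The main obstacle is the rank-two subcase, which genuinely requires the $GL(2,p)$ subgroup dichotomy rather than a crude order bound.
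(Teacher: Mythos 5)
Your proof is correct, and although it shares the paper's skeleton --- assume $m^*(G)=|G|$, invoke Proposition~\ref{prop2.1}, split on whether $O_p(G)=1$, dispose of the case $O_p(G)=1$ exactly as the paper does (Lemma~\ref{lem2.3} plus the measure bound $m^2>mp$ for the abelian $p$-complement), and reach the crux via Corollary~\ref{cor2.5} --- the execution differs at two genuine points. The paper's key subgroup is $Z(O_p(G))$: delicate measure estimates pin its order to exactly $p^2$ and its centralizer to exactly $O_p(G)$, so that $G/O_p(G)$ embeds in $GL(2,p)$. Yours is a minimal normal subgroup $N\le O_p(G)$ of a priori arbitrary rank $e\ge 2$, and you aim directly at the inequality $|N|>|G/C_G(N)|$: writing $H=G/C_G(N)$, all ranks $e\ge 3$ (and rank $2$ when $p\nmid |H|$) fall to the crude bound $|H|\le pm<p^3$, where Lemma~\ref{lem2.6} is what caps the $p$-part of $|H|$ at $p$. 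The endgames also diverge: once $SL(2,p)\le H$, the paper counts Sylow subgroups (Lemma~\ref{lem2.4} gives $n_p=m=p+1$, so $|H|=p(p+1)<|SL(2,p)|$ unless $p=2$, against $q<p$), whereas you argue arithmetically that $p^2-1$ divides $m<p^2$, forcing $m=p^2-1$, whose smallest prime divisor is $2$, so $m/q\ge p$, against the hypothesis $m/q<p$. Your route buys some economy: Lemma~\ref{lem2.4} is never needed, and a normal Sylow $p$-subgroup requires no separate refutation (in that case $H$ is a $p'$-group of order at most $m<p^2\le |N|$), whereas the paper must first rule out $P\unlhd G$ via $Z(P)$. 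The paper's route buys sharper structural byproducts along the way ($|Z(O_p(G))|=p^2$, $C_G(Z(O_p(G)))=O_p(G)$, and $m=p+1$). The one step you leave implicit --- that a minimal normal subgroup contained in $O_p(G)$ is elementary abelian --- is standard and harmless.
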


\begin{proof}
Suppose by way of contradiction that $m^*(G) = |G|$.  Let $P \in {Syl}_p(G)$.  If $|Z(P)| > p$, then $|P||Z(P)| \geq p^kp^2 > p^kqp > mp^k = |G|$, a contradiction.  So $Z(P)$ is cyclic of order $p$.  If $P \unlhd G$, then $Z(P) \unlhd G$, contrary to  Proposition~\ref{prop2.1}.  So $P$ is not normal in $G$.

Let $P \neq R \in {Syl}_p(G)$.  Then by Lemma~\ref{lem2.6}, $P \cap R = O_p(G)$ has order $p^{k-1}$.  If $O_p(G) = 1$, then $k=1$, and so by Lemma~\ref{lem2.3}, $G$ contains an abelian $p$-complement, $M$, and $|M|^2 = m^2 > pm = |G|$, a contradiction.  So $O_p(G) > 1$.  By Proposition~\ref{prop2.1} we know that $|Z(O_p(G))| > p$.  Suppose $|Z(O_p(G))| > p^2$.  Then $|O_p(G)||Z(O_p(G))| \geq p^{k-1}p^3 = p^kp^2 > p^kqp > mp^k = |G|$, a contradiction.  So $|Z(O_p(G))| = p^2$.  By Proposition~\ref{prop2.1}, $Z(O_p(G))$ cannot be cyclic, so $Z(O_p(G))$ is elementary abelian.  Finally we show that $C_G(Z(O_p(G))) = O_p(G)$.  Otherwise, if $C_G(Z(O_p(G))) > O_p(G)$, then $|C_G(Z(O_p(G)))| \geq qp^{k-1}$.  However, now, $|Z(O_p(G))||C_G(Z(O_p(G)))| \geq p^2 qp^{k-1} = p^kqp > mp^k = |G|$, a contradiction.

So $G/C_G(Z(O_p(G))) = G/O_p(G) = H$ is isomorphic to a subgroup of $GL(2,p)$, and we will identify $H$ with the corresponding subgroup of $GL(2,p)$.  Now a Sylow $p$-subgroup of $H$ is not normal (as $H = G/O_p(G)$), and so by Corollary~\ref{cor2.5}, $H$ contains $SL(2,p)$, which has order $p(p+1)(p-1)$, and $H$ contains all $p+1$ of the Sylow $p$-subgroups of $GL(2,p)$.  We know from Lemma~\ref{lem2.4} that $m$ is the number of Sylow $p$-subgroups of $H$, and so $|H| = (p+1)p$.  So $p-1=1$, i.e. $p=2$, contradicting the fact that $q < p$.
\end{proof}

\subsection{The case when $m=q$.}

We are now considering groups of order $qp^k$ where $p$,$q$ are primes with $p<q$.  Theorem 1.36 in \cite{Isa08} contains a slick, Sylow theory proof that groups of such order are solvable.  The result dates back to the 19th century.  It appears in Burnside's Theory of Groups of Finite Order\cite{Bur97} from 1897.  Of course it is a special case of Burnside's  $p^{\alpha}q^{\beta}$ Theorem\cite{Bur04}, which was proven shortly afterwards using character theory.  We state it here as our Proposition~\ref{prop2.3}.

\begin{proposition} \label{prop2.3}
Suppose $G$ is a group with $|G| = qp^k$ with $p,q$ primes and $k \geq 1$.  Then $G$ is solvable.
\end{proposition}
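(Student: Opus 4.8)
The plan is to argue by induction on $|G|$, reduce to the case where $G$ is a nonabelian simple group, and then show by Sylow theory alone that no such group of order $qp^{k}$ exists. First I would dispose of the trivial cases: if $|G|$ is a prime or a prime power, then $G$ is cyclic or a $p$-group, hence solvable. So assume both primes genuinely occur. If $G$ has a nontrivial proper normal subgroup $N$, then $|N|$ and $|G/N|$ are each a power of $p$, equal to $q$ (or $1$), or again of the shape $qp^{i}$; in every case the order is strictly smaller than $|G|$ and of the form covered by the statement, so by the inductive hypothesis both $N$ and $G/N$ are solvable, whence $G$ is solvable. This reduces everything to ruling out a nonabelian simple group $G$ of order $qp^{k}$.

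So suppose $G$ is such a group. The number $n_p$ of Sylow $p$-subgroups divides $q$, and since $G$ is simple it is not $1$, so $n_p = q$; as $n_p \equiv 1 \pmod{p}$ this forces $q \equiv 1 \pmod{p}$, so $p < q$ and a Sylow $p$-subgroup $P$ is self-normalizing. Likewise $n_q = p^{j}$ for some $j \geq 1$ with $p^{j} \equiv 1 \pmod{q}$. Because the Sylow $q$-subgroups have prime order they meet pairwise trivially, so $G$ contains exactly $p^{j}(q-1)$ elements of order $q$. If the $q$ Sylow $p$-subgroups also met pairwise trivially, then counting the $p$-power elements as $q(p^{k}-1)$ would leave only $q-1$ elements of order divisible by $q$, contradicting $p^{j}(q-1) \geq p(q-1) > q-1$. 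Hence some two distinct Sylow $p$-subgroups intersect nontrivially.

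The decisive step, and the one I expect to be the main obstacle, is to exploit this nontrivial intersection through a maximal Sylow-intersection argument. I would choose $D = P_1 \cap P_2 \neq 1$ of maximal order among all intersections of distinct Sylow $p$-subgroups and set $N = N_G(D)$. Since normalizers grow in $p$-groups, $N_{P_i}(D)$ properly contains $D$ for $i = 1, 2$; the maximality of $D$ then shows that $N$ cannot be contained in a single Sylow $p$-subgroup, so $q$ divides $|N|$. On the other hand $D$ is a nontrivial proper subgroup, so $N \neq G$ by simplicity, and thus $N$ is a proper subgroup of order $qp^{b}$ which is solvable by the inductive hypothesis.

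The hard part is to convert the structure of this solvable local subgroup $N$ — which contains a full Sylow $q$-subgroup of $G$ together with the $p$-subgroup $D$ it normalizes — into an outright contradiction, analyzing how the order-$q$ elements act on $D$ and its overgroups. This is precisely where the hypothesis that $q$ occurs only to the first power is indispensable, and it is the core of the Sylow-theoretic argument referenced from Isaacs \cite{Isa08}. Alternatively, one may simply invoke Burnside's theorem that every group of order $p^{a}q^{b}$ is solvable \cite{Bur04}, of which Proposition~\ref{prop2.3} is the special case $b = 1$.
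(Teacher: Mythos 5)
You should first know that the paper itself offers no proof of Proposition~\ref{prop2.3}: it presents the result as classical, deferring to Theorem~1.36 of Isaacs \cite{Isa08} for a Sylow-theoretic proof and to Burnside \cite{Bur97}, \cite{Bur04}. So the one sentence of your proposal that actually discharges the proposition --- that one may simply invoke Burnside's $p^{\alpha}q^{\beta}$ theorem --- is precisely what the paper does, and in that sense your proposal and the paper agree.

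The self-contained argument you sketch, however, contains a genuine gap, and you flag it yourself. Everything before the gap is correct: the induction reduces the claim to ruling out a nonabelian simple group of order $qp^{k}$; the Sylow counting ($n_p = q$, $n_q = p^{j}$, so the elements of order $q$ number $p^{j}(q-1) > q-1$) correctly forces two distinct Sylow $p$-subgroups to meet nontrivially; and passing to a maximal intersection $D$ with $N = N_G(D)$ and showing $q$ divides $|N|$ is indeed the standard opening of the argument in Isaacs. But the proof then stops exactly where the mathematical content lies. Knowing that $N$ is a proper, solvable subgroup containing a full Sylow $q$-subgroup of $G$ does not, by itself, contradict anything: one must still exploit the maximality of $D$ to control how the Sylow $p$-subgroups of $N$ sit inside those of $G$ and manufacture from $N$ either a normal subgroup of $G$ or an impossible count, and none of that is carried out. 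Your statement that the hard part is to convert the structure of $N$ into an outright contradiction is an accurate description of what remains unproven, not a proof of it. So as an independent argument the proposal is incomplete; it stands only by virtue of the closing citation, which coincides with the paper's own treatment.
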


Note that in the above proposition, when $p \geq q$, $G$ is clearly solvable as a Sylow $p$-subgroup of $G$ is normal.  The case when $p < q$ is the tricky case.  

Now a lemma:

\begin{lemma} \label{lem2.5}
Suppose $V$ is an $n$-dimensional vector space over $GF(p)$, and suppose $K$ is a subgroup of $Aut(V)$.  If $K$ acts irreducibly on $V$, then $O_p(K) = 1$.  If, furthermore, $|K| = qp^r$ with $p < q$ a prime and $r \geq 1$, then a Sylow $q$-subgroup of $K$ is normal in $K$.
\end{lemma}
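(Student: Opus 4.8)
The plan is to treat the two assertions in turn, using the first to drive the second.

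For the statement that $O_p(K) = 1$, I would argue by contradiction against irreducibility. The essential tool is the standard fixed-point lemma in the defining characteristic: if a nontrivial finite $p$-group $P$ acts on a nonzero $GF(p)$-vector space $V$, then $C_V(P) \neq 0$. This comes from orbit counting, since every $P$-orbit on $V$ has $p$-power size and $0$ is fixed, so the number of fixed vectors is divisible by $p$ and hence is at least $p$. I would apply this with $P = O_p(K)$ to obtain a nonzero subspace $W = C_V(O_p(K))$. Because $O_p(K)$ is normal in $K$, the subspace $W$ is $K$-invariant: for $v \in W$ and $k \in K$, conjugating any element of $O_p(K)$ back into $O_p(K)$ shows that $kv$ is again fixed by $O_p(K)$. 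Now if $O_p(K) \neq 1$, then since $K \leq \mathrm{Aut}(V)$ acts faithfully, $O_p(K)$ cannot act trivially on all of $V$, so $W \neq V$. Thus $W$ would be a proper, nonzero, $K$-invariant subspace, contradicting irreducibility. Hence $O_p(K) = 1$.

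For the second assertion, assume $|K| = qp^r$ with $p < q$ and $r \geq 1$. Since $|K|$ has the form $qp^r$, Proposition~\ref{prop2.3} tells me that $K$ is solvable. For a nontrivial finite solvable group the Fitting subgroup $F(K)$ is nontrivial, and because $p$ and $q$ are the only (and, as $p < q$, distinct) primes dividing $|K|$, it decomposes as $F(K) = O_p(K) \times O_q(K)$. The first part of the lemma gives $O_p(K) = 1$, so $F(K) = O_q(K) \neq 1$. Since $q$ divides $|K|$ only to the first power, any nontrivial normal $q$-subgroup is already a full Sylow $q$-subgroup; therefore $O_q(K)$ is a normal Sylow $q$-subgroup of $K$, as claimed. (Equivalently, one could take a minimal normal subgroup of the solvable group $K$, note it is elementary abelian and cannot be a $p$-group by the first part, and conclude it has order $q$.)

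I expect the only real subtlety to live in the first part: one must check carefully that $C_V(O_p(K))$ is genuinely $K$-invariant, which is exactly where normality of $O_p(K)$ is used, and that it is genuinely proper, which is where faithfulness of the action enters. The counting lemma and the Fitting decomposition are standard, and solvability is handed to us by Proposition~\ref{prop2.3}, so once irreducibility is exploited to rule out the fixed-point subspace the whole argument should go through without computation.
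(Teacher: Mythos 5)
Your proof is correct and follows essentially the same route as the paper: the fixed-point subspace $C_V(O_p(K))$ is nonzero by orbit counting, $K$-invariant by normality, and irreducibility plus faithfulness forces $O_p(K)=1$, after which solvability (Proposition~\ref{prop2.3}) yields the normal Sylow $q$-subgroup. In fact your write-up is slightly more complete than the paper's, since the paper's phrase ``$K$ is solvable, and so a Sylow $q$-subgroup of $K$ is normal'' silently uses exactly the Fitting-subgroup argument $F(K)=O_p(K)\times O_q(K)=O_q(K)\neq 1$ that you spell out.
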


\begin{proof}
Suppose that $K$ acts irreducibly on $V$.  Note that the points of $V$ left fixed by $O_p(K)$ form a $K$-invariant subgroup of $V$.  By the orbit-stabilizer theorem, $O_p(K)$ has non-trivial fixed points, and as $K$ is irreducible, $O_p(K)$ must fix all of $V$, and as $O_p(K)$ acts faithfully on $V$, it must be that $O_p(K) = 1$.  If, furthermore, $|K| = qp^r$ with $p < q$ a prime and $r \geq 1$, then by Proposition~\ref{prop2.3} $K$ is solvable, and so a Sylow $q$-subgroup of $K$ is normal in $K$.  
\end{proof}

Here is how the Chermak-Delgado measure condition plays in:

\begin{proposition} \label{prop2.12}
Suppose $G$ is a group with $|G| = qp^k$ with $p < q$ primes and $k \geq 1$, and suppose that $m^*(G) = |G|$.  Suppose $N$ is a minimal normal subgroup of $G$.  Then $|N| = p^n$ for some $n \geq 1$ and $|G/C_G(N)| = qp^r$ for some $r \geq 1$ with $p^n < qp^r$.  Furthermore, a Sylow $q$-subgroup of $G/C_G(N)$ acts irreducibly on $N$.
\end{proposition}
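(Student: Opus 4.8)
The plan is to extract four facts in turn — that $N$ is a $p$-group, that $q \mid |G/C_G(N)|$, that the exponent $r$ is at least $1$, and finally the irreducibility of the Sylow $q$-subgroup — using the hypothesis $m^*(G)=|G|$ repeatedly, most crucially through the inequality $m_G(N)=|N|\,|C_G(N)|\le |G|$, i.e. $p^{\,n}\le |G:C_G(N)|$. First I would record two consequences of $m^*(G)=|G|$: since $m_G(1)=|G|=m^*(G)$, the identity is the least element of $\CD(G)$, so by Proposition 2 we get $Z(G)=1$; and since $G$ is solvable by Proposition~\ref{prop2.3}, the minimal normal subgroup $N$ is elementary abelian. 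As the $q$-part of $|G|$ is just $q$, were $N$ a $q$-group it would be cyclic of order $q$ and normal, contradicting Proposition~\ref{prop2.1}; hence $|N|=p^{\,n}$.

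Next I would show $q\mid |G/C_G(N)|$. Because $N\ne 1=Z(G)$ we have $C_G(N)<G$, so $H:=G/C_G(N)$ is a nontrivial group acting faithfully on $N\cong GF(p)^n$. If $q\nmid|H|$ then $H$ is a nontrivial $p$-group, and a $p$-group acting on a nonzero $GF(p)$-space always fixes a nonzero vector; but the fixed space is $C_N(H)=N\cap Z(G)=1$, a contradiction. Thus $|H|=qp^{\,r}$ for some $r\ge 0$, and the required strict inequality is then automatic: the measure bound gives $p^{\,n}\le qp^{\,r}$, and since $q\nmid p^{\,n}$ equality is impossible, so $p^{\,n}<qp^{\,r}$. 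To see $r\ge 1$, suppose $r=0$, so $H\cong C_q$ acts faithfully on $N$; then $q$ divides $|GL(n,p)|$ and, being prime to $p$, divides some $p^{\,i}-1$ with $i\le n$, whence $q\le p^{\,n}-1<p^{\,n}$, contradicting $p^{\,n}\le |G:C_G(N)|=q$. Hence $r\ge 1$.

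The irreducibility of the Sylow $q$-subgroup is where the real work lies. Since $N$ is minimal normal, $H$ acts irreducibly on $N$, so Lemma~\ref{lem2.5} applies: $O_p(H)=1$ and the Sylow $q$-subgroup $Q_0$ (of order $q$) is normal in $H$. A Schur--Zassenhaus argument then pins down the structure of $H$: the normal subgroup $C_H(Q_0)$ contains $Q_0$ in its center, hence splits as $Q_0\times P_0$ with $P_0$ a characteristic, so normal, $p$-subgroup of $H$; thus $P_0\le O_p(H)=1$ and $C_H(Q_0)=Q_0$. Therefore $H/Q_0\hookrightarrow Aut(Q_0)\cong C_{q-1}$ and $p^{\,r}\mid q-1$. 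Writing $d$ for the multiplicative order of $p$ modulo $q$ --- equivalently the dimension over $GF(p)$ of a nontrivial irreducible $GF(p)[Q_0]$-module --- we have $q\mid p^{\,d}-1$, so $p^{\,r}\le q-1<q<p^{\,d}$ and hence $r<d$.

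Finally, because $Q_0\unlhd H$ and $H$ is irreducible, the $H$-invariant fixed space $C_N(Q_0)$ is $0$, so every irreducible constituent of $N|_{Q_0}$ is nontrivial of dimension $d$; Clifford's theorem gives $n=\dim N=e\,t\,d$ for integers $e,t\ge 1$. The measure bound now closes the argument: $p^{\,n}\le qp^{\,r}<p^{\,d+r}$ forces $n<d+r<2d$, so $etd<2d$, i.e. $et=1$ and $n=d$, whence $N$ is $Q_0$-irreducible. \emph{The main obstacle} is precisely this last step --- ruling out repeated or multiple $Q_0$-constituents --- and the key insight is that the self-centralizing (hence metacyclic) structure of $H$ forces $r<d$, after which the Chermak--Delgado inequality $p^{\,n}\le qp^{\,r}$ leaves no room for $N|_{Q_0}$ to be anything but a single $d$-dimensional constituent.
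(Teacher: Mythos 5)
Your proof is correct, and its heart --- the irreducibility of the Sylow $q$-subgroup $Q_0$ of $H=G/C_G(N)$ --- is essentially the paper's argument in different packaging. Both proofs establish $C_H(Q_0)=Q_0$ (so $p^r$ divides $q-1$, giving $p^r<q$), both use that a faithful action of the prime-order group $Q_0$ on a $d$-dimensional $GF(p)$-space forces $q<p^d$, and both then play the Chermak--Delgado bound $p^n<qp^r$ against a Maschke decomposition of $N$. The paper argues by contradiction, splitting $N=W_1\times W_2$ with $Q_0$ faithful on each piece and computing $qp^r<p^{2d}\le p^n$; you argue affirmatively via Clifford's theorem, pinning $d$ as the multiplicative order of $p$ modulo $q$, writing $n=etd$, and deducing $n<d+r<2d$, hence $et=1$. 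Same inequalities, same conclusion. Where you genuinely differ is in the preliminaries, and your route is leaner: to show $N$ is a $p$-group, the paper proves $O_q(G)=1$ by a hands-on Sylow argument (a normal Sylow $q$-subgroup would force $p^k<q$ and then $m_G(Q)=q^2>|G|$), whereas you observe that a minimal normal $q$-subgroup would be cyclic of order $q$ and invoke Proposition~\ref{prop2.1}, reusing a result already in hand; likewise you exclude the case $|H|=p^r$ by a direct fixed-point count rather than through the $O_p(H)=1$ conclusion of Lemma~\ref{lem2.5}. The only spot wanting a word is your assertion $C_N(Q_0)=0$: besides $H$-invariance and irreducibility, one must note that $C_N(Q_0)=N$ is ruled out because $H$ acts faithfully, exactly as the paper makes explicit.
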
 

\begin{proof}
If $Q \in {Syl}_q(G)$ with $Q \unlhd G$, then given $P \in {Syl}_p(G)$, $P$ acts on $Q$ via conjugation, and the kernel of that action, $K$, is a normal subgroup of $P$.  If $K \neq 1$, then $K \cap Z(P) \neq 1$, but $K \cap Z(P) \leq Z(G) = 1$, a contradiction.  So $K = 1$ and so $P$ embeds in $Aut(Q)$ which has order $q-1$, and so $p^k < q$.  However, then $m_G(Q) = q^2 > qp^k = |G|$, a contradiction.  

So $O_q(G) = 1$.  Now by Proposition~\ref{prop2.3}, $G$ is solvable, and so a minimal normal subgroup, $N$, of $G$ has order $p^n$ for some $n \geq 1$.  As $N$ is minimal normal, $G/C_G(N)$ acts irreducibly and faithfully on $N$ which can be viewed as an $n$-dimensional vector space over $GF(p)$.  So by Lemma~\ref{lem2.5}, $O_p(G/C_G(N)) = 1$.  

Observe that $G/C_G(N)$ cannot be cyclic of order $q$, as otherwise $q$ divides $|Aut(N)|$ and since $q$ is a prime, we have $q < p^n$.  However, then $m_G(N) = |N||C_G(N)| = p^n p^k > q p^k = |G|$, a contradiction.  So $|G/C_G(N)| = qp^r$ for some $r \geq 1$.  We see that $|N| = p^n < qp^r = |G/C_G(N)|$.

Denote $G/C_G(N) = H$.  By Lemma~\ref{lem2.5}, $T \in {Syl}_q(H)$ is normal in $H$ and $O_p(H) = 1$.  And so $S \in {Syl}_p(H)$ acts by conjugation faithfully on $T$ (the kernel of that action is normal in $H$).  So $S$ embeds in $Aut(T)$ which has order $q-1$ and so $p^r < q$.  

Now we argue that $T$ acts irreducibly on $N$.  Suppose not.  $T$ cannot act trivially on all of $N$ as $T$ is a subgroup of $G/C_G(N)$ which acts faithfully.  As $T$ has prime order, either $T$ acts faithfully on a subgroup of $N$, or $T$ acts trivially.   Since $T$ is normal in $H$, $T$ cannot have non-trivial, proper fixed points in $N$, as all of the points of $N$ fixed by $T$ form a normal subgroup of $G$, contradicting the minimality of $N$.  By Maschke's Theorem we properly decompose $N = W_1 \times W_2$ where $T$ acts faithfully on each $W_i$.  Let $d_1 = \dim(W_1)$ and $d_2 = \dim(W_2)$ and let $d = \min\{d_1,d_2\}$.  Since $q$ is a prime, $q < p^d$.  However, now $p^r < q < p^d$, and so $qp^r < qp^d < p^{2d} \leq p^{d_1 + d_2} = p^n$.  So $qp^r = |G/C_G(N)| < |N| = p^n$, and so $m_G(N) = |N||C_G(N)| > |G|$, a contradiction.  So $T$ acts irreducibly on $N$.  
\end{proof}

This next proposition is Lemma 6.3 in Gorenstein's Finite Groups\cite{Gor80}.   

\begin{proposition} \label{prop2.11}
Suppose $T \in GL(n,p)$ has order $m$ and suppose $T$ is irreducible.  Then $m \, | \, p^n - 1$ and $m \nmid p^d - 1$ for any $d < n$.
\end{proposition}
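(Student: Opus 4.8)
The plan is to translate the hypothesis into the language of finite fields. Regard $V = GF(p)^n$ as a module over the polynomial ring $GF(p)[x]$, with the indeterminate $x$ acting as $T$; then the cyclic group $\langle T \rangle$ acting irreducibly on $V$ says exactly that $V$ is a simple $GF(p)[x]$-module. The main step, and the one I expect to be the crux, is to deduce that the minimal polynomial $\mu$ of $T$ is irreducible of degree $n$. Since $GF(p)[x]$ is a principal ideal domain, a simple module is isomorphic to $GF(p)[x]/(f)$ for an irreducible $f$; equivalently, the structure theorem (rational canonical form) shows that any proper factorization of $\mu$, or any drop of $\deg \mu$ below $n$, would produce a proper nonzero $T$-invariant subspace, contradicting irreducibility. (Alternatively, one may invoke Schur's Lemma together with Wedderburn's theorem on finite division rings.) Consequently $A := GF(p)[T] \cong GF(p)[x]/(\mu) \cong GF(p^n)$ is a field of $p^n$ elements, and $T$ is identified with an element $\alpha \in GF(p^n)^{\times}$ whose multiplicative order equals $m$ (because $T^k = 1$ if and only if $\alpha^k = 1$) and which generates $GF(p^n)$ over $GF(p)$, i.e. $GF(p)[\alpha] = GF(p^n)$.

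Granting this identification, the first divisibility is immediate: $\alpha$ lies in the cyclic group $GF(p^n)^{\times}$ of order $p^n - 1$, so $m = \mathrm{ord}(\alpha)$ divides $p^n - 1$ by Lagrange's theorem.

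For the second assertion I would argue by contradiction. Suppose $m \mid p^d - 1$ for some $1 \le d < n$. Then $\alpha^{p^d - 1} = 1$, so $\alpha^{p^d} = \alpha$; that is, $\alpha$ is fixed by the $d$-th power of the Frobenius automorphism $x \mapsto x^p$ of $GF(p^n)$. The fixed field of this automorphism inside $GF(p^n)$ is $GF(p^{\gcd(n,d)})$, which is a proper subfield since $\gcd(n,d) \le d < n$. Hence $\alpha \in GF(p^{\gcd(n,d)})$, contradicting $GF(p)[\alpha] = GF(p^n)$ (equivalently, $\deg \mu = n$). This contradiction shows $m \nmid p^d - 1$ for every $d < n$, completing the argument. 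The only genuine obstacle is the field identification in the first step; once $T$ is realized as a field generator of $GF(p^n)$, both divisibility statements reduce to routine finite-field arithmetic.
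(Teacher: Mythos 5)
Your proof is correct. There is, however, nothing in the paper to compare it against step by step: the paper does not prove this proposition at all, but simply quotes it as Lemma 6.3 of Gorenstein's \emph{Finite Groups}. Your argument is the standard one lying behind that citation, carried out in full. The crux, as you identify, is the field identification: irreducibility of $\langle T \rangle$ on $V$ makes $V$ a simple $GF(p)[x]$-module, forcing the minimal polynomial $\mu$ of $T$ to be irreducible of degree $n$, whence $GF(p)[T] \cong GF(p)[x]/(\mu) \cong GF(p^n)$ with $T$ corresponding to a generator $\alpha$ of the field extension whose multiplicative order is $m$. Then $m \mid p^n - 1$ is Lagrange's theorem in $GF(p^n)^{\times}$, and if $m \mid p^d - 1$ with $1 \leq d < n$, then $\alpha^{p^d} = \alpha$ places $\alpha$ in the proper subfield $GF(p^{\gcd(d,n)})$, contradicting $GF(p)[\alpha] = GF(p^n)$; both steps are sound (the degenerate value $d = 0$ is excluded by the intended reading $1 \leq d < n$). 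What your proof buys beyond the citation is self-containedness, and it dovetails nicely with the rest of the paper: realizing an irreducible cyclic subgroup of $GL(n,p)$ inside the multiplicative group of $GF(p^n)$ is exactly the Singer-cycle picture the paper sets up before Proposition~\ref{prop2.13}, so your argument makes explicit the mechanism that the subsequent results rely on.
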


In $GL(n,p)$ the irreducible cyclic subgroups of order $p^n - 1$ are called the \textbf{Singer cycles} (in fact this definition also extends to $GL(n,q)$ where $q$ is a power of a prime $p$).  The following is contained in Theorem II.7.3a on pg.187 in Huppert\cite{Hup67}, and we state it the way it appears in Short\cite{Sho92}.

\begin{proposition} \label{prop2.8}
Let $T$ be an irreducible subgroup of a Singer cycle $S$ of $GL(n,p)$.  Then the normalizer in $GL(n,p)$ of $T$ is the semidirect product of $S$ and a cyclic group $L$ of order $n$.  
\end{proposition}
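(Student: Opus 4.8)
The plan is to realize the Singer cycle concretely and reduce everything to the Galois theory of $GF(p^n)$ over $GF(p)$. I would identify $V = GF(p)^n$ with the field $F = GF(p^n)$, viewed as an $n$-dimensional $GF(p)$-vector space, so that $GL(n,p) = GL_{GF(p)}(F)$ and the Singer cycle $S$ is the image of $F^\times$ acting by multiplication. The first and crucial step is to pin down the centralizer $C := C_{GL(n,p)}(T)$. Since $T \leq S = F^\times$, the $GF(p)$-subalgebra $GF(p)[T]$ of $\mathrm{End}(V)$ sits inside the field $F$ and is therefore itself a subfield $E$. Viewing $V$ as an $E$-vector space on which $T$ acts by scalars, irreducibility of $T$ forces $\dim_E V = 1$, so $[E:GF(p)] = n$ and hence $E = F$. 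Consequently $C = \mathrm{End}_F(V)^\times = F^\times = S$. Thus $S$ is exactly the centralizer of $T$, so that $S \leq N$ and $S \unlhd N$, where $N := N_{GL(n,p)}(T)$.

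Second, I would build a homomorphism to the Galois group. For $g \in N$, conjugation by $g$ carries $T$ to $T$, hence carries the generated field $GF(p)[T] = F$ to itself while fixing the scalars $GF(p)\cdot 1$ pointwise; it therefore restricts to a $GF(p)$-algebra automorphism of $F$. This defines a homomorphism $\rho : N \to \mathrm{Gal}(F/GF(p))$, and the target is cyclic of order $n$. The kernel of $\rho$ consists of the $g \in N$ that centralize all of $F$, equivalently centralize $S = F^\times$, which by the centralizer computation above is exactly $S$. Hence $N/S \hookrightarrow \mathrm{Gal}(F/GF(p))$, so $N/S$ is cyclic of order dividing $n$.

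Third, I would show this embedding is onto and split off a complement. The Frobenius $\phi : x \mapsto x^p$ is $GF(p)$-linear, lies in $GL(n,p)$, and conjugates multiplication-by-$c$ to multiplication-by-$c^p$; since every subgroup of the cyclic group $S$ is characteristic, $\phi$ preserves $T$, so $\phi \in N$ and $\rho(\phi)$ is the Frobenius generator of $\mathrm{Gal}(F/GF(p))$. Thus $\rho$ is surjective and $N/S$ is cyclic of order exactly $n$. Finally, a direct check that $\phi^j \in S$ forces $n \mid j$ gives $\langle \phi \rangle \cap S = 1$, so $L := \langle \phi \rangle$ is a cyclic complement of order $n$ and $N = S \rtimes L$.

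I expect the main obstacle to be the centralizer identification $C_{GL(n,p)}(T) = S$, since everything else (the Galois homomorphism, the computation of its kernel, surjectivity via the Frobenius, and the splitting) flows directly from it. This is precisely the step where irreducibility of $T$ is indispensable: Proposition~\ref{prop2.11} is the criterion guaranteeing that $GF(p)[T]$ is the full degree-$n$ field rather than a proper subfield, and without it the centralizer, and hence $N$, would be strictly larger.
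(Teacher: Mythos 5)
Your proof is correct, but there is nothing in the paper to compare it against step by step: the paper does not prove Proposition~\ref{prop2.8} at all. It quotes the result from Huppert (Theorem II.7.3a) in the form given in Short, and the only verification the paper itself supplies is the easy inclusion, namely the computation $(v){\sigma_t}^{\varphi} = t^p v$, i.e.\ ${\sigma_t}^{\varphi} = \sigma_{t^p}$, showing that the Galois group $L$ normalizes every subgroup $T \leq S$ and hence $SL \leq N_{GL(n,p)}(T)$. Your argument supplies the hard direction --- that the normalizer is no larger than $SL$ --- by the standard route underlying Huppert's theorem: first the centralizer identification $C_{GL(n,p)}(T) = S$, where irreducibility forces the subalgebra $GF(p)[T]$ to be the full field $GF(p^n)$ (a proper $E$-subspace of $V$ would be $T$-invariant), so that $V$ is a one-dimensional $GF(p^n)$-space and the centralizer is exactly the multiplicative group $S$; then the conjugation homomorphism $\rho : N_{GL(n,p)}(T) \to \mathrm{Gal}(GF(p^n)/GF(p))$ with kernel $S$, surjectivity witnessed by the Frobenius map $\phi$, and the check $\langle \phi \rangle \cap S = 1$ to obtain the splitting $N = S \rtimes L$. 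All of these steps are sound, so what your approach buys is a self-contained paper, whereas the author's citation keeps the exposition short at the cost of importing a nontrivial classification-style fact from the literature. One small remark: in your closing paragraph you credit Proposition~\ref{prop2.11} with guaranteeing $GF(p)[T] = GF(p^n)$, but your own subspace argument already does this without it; Proposition~\ref{prop2.11} merely gives an alternative derivation (if $GF(p)[T]$ had degree $d < n$, then $|T|$ would divide $p^d - 1$, contradicting the nondivisibility conclusion there).
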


Let $p$ be a prime and $n \geq 1$.  To get a Singer cycle, start by fixing a representation of an $n$-dimensional vector space, $V$, over $GF(p)$ as the additive group of $GF(p^n)$.  Note that each element $t \in V / \{ 0 \}$ induces an automorphism ${\sigma}_t$ of $V$, defined by $(v){\sigma}_t = tv$, where the multiplication $tv$ is in the field $GF(p^n)$.  Let $S$ be the subgroup of $Aut(V)$ consisting of all such ${\sigma}_t$.  $S$ is canonically isomorphic to the multiplicative group of units of $GF(p^n)$, and thus is cyclic of order $p^n -1$, and so $S$ is a Singer cycle.  Let $L$ be the Galois group of $GF(p^n)$ over $GF(p)$ and view $L$ as a subgroup of $Aut(V)$.  So $L$ is cyclic of order $n$, and is generated by the Frobenius map $\varphi$ defined as $v^{\varphi} = v^p$, raising to the power $p$.  Given any subgroup $T$ of $S$, one can see that $L$ normalizes $T$: $(v){{\sigma}_t}^{{\varphi}} = t^{p} v$ for any $v \in V$, and thus ${{\sigma}_t}^{{\varphi}} = {\sigma}_{t^{p}}$.  For the remaining results in this section, we retain the names $S$ and $L$ for the specific groups described above.

The following follows rather easily from Proposition~\ref{prop2.8}:

\begin{proposition} \label{prop2.13}
Suppose $V$ is an $n$-dimensional vector space over $GF(p)$, and fix a representation of $V$ as the additive group of $GF(p^n)$; and suppose $K$ is a subgroup of $Aut(V)$ so that $|K| = qp^r$ with $p < q$ a prime and $r \geq 1$.  Suppose that a Sylow $q$-subgroup of $K$ acts irreducibly on $V$.  Then $K$ is conjugate in $Aut(V)$ to the automorphism group $TH$ with $T \leq S$ of order $q$ and $H \leq L$ of order $p^r$, where $S$ and $L$ are defined as in the previous paragraph.  It follows that $p^r \, | \, n$.
\end{proposition}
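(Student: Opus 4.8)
The plan is to identify the normal Sylow $q$-subgroup of $K$, conjugate it into the fixed Singer cycle $S$, use Proposition~\ref{prop2.8} to trap $K$ inside $N_{Aut(V)}(T)=SL$, and then strip off the $p$-part using Schur--Zassenhaus together with Sylow's theorem. First I would note that if a Sylow $q$-subgroup $T$ of $K$ acts irreducibly on $V$, then so does the larger group $K$. Lemma~\ref{lem2.5} then gives $O_p(K)=1$ and, crucially, that $T \unlhd K$. Since $|T|=q$ is prime, $T=\langle t\rangle$ is cyclic and irreducible, so Proposition~\ref{prop2.11} yields $q \mid p^n-1$ and $q \nmid p^d-1$ for all $d<n$; in particular $n$ is the multiplicative order of $p$ modulo $q$.

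Next I would locate $T$ inside a Singer cycle. Because $t$ acts irreducibly, $V$ is an irreducible (hence cyclic) $GF(p)[t]$-module, so by Schur's lemma its commutant $E=\mathrm{End}_{GF(p)[t]}(V)$ is a field; irreducibility forces $\dim_E V = 1$, whence $E \cong GF(p^n)$ and $t \in E^{\times}$. Thus $T$ lies in the Singer cycle $E^{\times}$. Since all Singer cycles of $Aut(V)=GL(n,p)$ are conjugate (a standard consequence of Skolem--Noether), after replacing $K$ by a conjugate I may assume $T \leq S$. Now $T$ is an irreducible subgroup of the fixed Singer cycle $S$, so Proposition~\ref{prop2.8} applies and $N_{Aut(V)}(T)=S \rtimes L$ with $L$ cyclic of order $n$. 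As $T \unlhd K$, this forces $K \leq SL$.

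Finally I would analyze $K$ inside $SL$. Since $p \nmid |S|=p^n-1$, we have $\gcd(|S|,|K|)=\gcd(p^n-1,\,qp^r)=q$, so $K \cap S = T$ and $KS/S \cong K/T$ is a subgroup of order $p^r$ of the cyclic group $SL/S \cong L$ of order $n$; this already yields $p^r \mid n$. Because $\gcd(q,p^r)=1$ and $T \unlhd K$, Schur--Zassenhaus furnishes a complement $H_0$ of order $p^r$, giving $K = T \rtimes H_0$ with $H_0$ a $p$-group. As $S$ has order prime to $p$, the $p$-part of $|SL|$ equals the $p$-part of $n=|L|$, and $L$ contains a full Sylow $p$-subgroup of $SL$; hence $H_0$ lies in some $SL$-conjugate of $L$. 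Conjugating $K$ by the relevant $g \in SL$ moves $H_0$ into $L$, and since $S \unlhd SL$ is cyclic with a unique subgroup of order $q$, this conjugation fixes $T$. Thus $K$ is conjugate to $TH$ with $T \leq S$ of order $q$ and $H=H_0^{\,g}\leq L$ of order $p^r$, and $p^r \mid n$, as claimed.

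I expect the main obstacle to be carrying out the two conjugations coherently, namely moving $T$ into $S$ and then $H_0$ into $L$ without disturbing $T$. The resolution is the observation that $S$ is normal in $SL$ and cyclic, so its unique order-$q$ subgroup $T$ is invariant under every inner automorphism of $SL$; this decouples the two reductions and lets the second conjugation act freely on the $p$-part.
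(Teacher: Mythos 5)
Your proof is correct, and its overall skeleton parallels the paper's: normality of the Sylow $q$-subgroup via Lemma~\ref{lem2.5}, trapping $K$ inside $N_{Aut(V)}(T)=SL$ via Proposition~\ref{prop2.8}, and a final Sylow argument inside $SL$ to move the $p$-part into $L$ without disturbing $T$. The genuine difference is how you get $T$ into the fixed Singer cycle $S$. The paper stays entirely inside Sylow theory: from Proposition~\ref{prop2.11} (specifically the clause that $q \nmid p^d - 1$ for $d<n$) it deduces that the $q$-part of $|GL(n,p)|$ equals the $q$-part of $p^n-1$, so $S$ contains a full Sylow $q$-subgroup of $Aut(V)$, and Sylow conjugacy plus uniqueness of the order-$q$ subgroup of the cyclic group $S$ give $Q^x = T$. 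You instead argue module-theoretically: Schur's lemma and Wedderburn identify the enveloping algebra of $t$ with a copy of $GF(p^n)$ acting on a one-dimensional space, so $T$ sits inside a Singer cycle, and Skolem--Noether conjugates that Singer cycle onto $S$. This makes your citation of Proposition~\ref{prop2.11} essentially redundant ($q \mid p^n - 1$ falls out of $t \in E^{\times}$), whereas the paper's route genuinely needs it; on the other hand, your route requires the conjugacy of Singer cycles, which the paper avoids. Your argument also hands you $p^r \mid n$ early and cleanly, via $K/T \cong KS/S \hookrightarrow SL/S \cong L$, where the paper only reads it off at the end from $H \leq L$. Finally, your justification that the last conjugation fixes $T$ (namely, $T$ is the unique subgroup of order $q$ of the cyclic normal subgroup $S$, hence invariant under every inner automorphism of $SL$) is a slightly more robust version of the paper's device of choosing the conjugating element $\sigma$ inside the abelian group $S$. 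In short: the paper's version is more elementary, using nothing beyond Sylow theory once Proposition~\ref{prop2.11} is granted, while yours is more structural and would generalize more readily, since the commutant argument does not depend on $|T|$ being prime or on computing the $q$-part of $|GL(n,p)|$.
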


\begin{proof}
We will view $V$ as the additive group of $GF(p^n)$.  Let $Q \in {Syl}_q(K)$.  Now $Q$ is irreducible and cyclic of order $q$ a prime, so by Proposition~\ref{prop2.11} we have that $q$ divides $p^n - 1$ and $q$ does not divide $q^d - 1$ for $d < n$.  And so a Singer cycle contains a Sylow $q$-subgroup of $Aut(V)$.  $Q$ is contained in a Sylow $q$-subgroup of $Aut(V)$, and so $Q^x = T$ for $T \leq S$, where $S$ is the Singer cycle described earlier, and $T$ is the subgroup of $S$ of order $q$, and for some $x \in Aut(V)$.  As $Q$ acts irreducibly on $V$, surely $K$ acts irreducibly on $V$, and so by Lemma~\ref{lem2.5}, $O_p(K) = 1$ and $Q \unlhd K$.  Let $P \in {Syl}_p(K)$.  Then $P \leq N_{Aut(V)}(Q)$ and so $P^x \leq N_{Aut(V)}(Q^x) = N_{Aut(V)}(T) = SL$, where $L$ is the Galois group of $GF(p^n)$; this is by Proposition~\ref{prop2.8}.  Now $|P^x|$ is coprime to $|S|$, and so $P^x \leq R^{\sigma}$, where $R \leq L$ is a Sylow $p$-subgroup of $L$ for some $\sigma \in S$.  So $(P^x)^{{\sigma}^{-1}} = H \leq L$, where $H \leq L$ is of order $p^r$.   We see that $K^{x{\sigma}^{-1}} = {(QP)}^{x{\sigma}^{-1}} = Q^xP^{x{\sigma}^{-1}} = TH$.  
\end{proof}

Now that we know that $p^r \, | \, n$, our Chermak-Delgado measure condition leaves few possibilities:

\begin{lemma} \label{lem2.10}
Suppose $p,q$ are primes with $p^n < qp^r$ for $n,r \geq 1$, and suppose that $q \, | \, p^n - 1$ and $p^r \, | \, n$.  Then we have exactly one of the following:
\begin{enumerate}
\item $p=2$, $q=3$, $n=2$ and $r=1$.
\item $p=2$, $q=5$, $n=4$ and $r=2$.
\item $p > 2$, $q=(p^p - 1)/(p-1)$, $n=p$ and $r=1$.
\end{enumerate}
\end{lemma}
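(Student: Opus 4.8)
The plan is to treat this as a pure number-theoretic statement and squeeze the prime $q$ between two bounds until it is forced to equal a base-$p^r$ repunit, after which primality together with the divisibility $p^r \mid n$ leaves only the three listed possibilities. First I would record the two elementary consequences of the hypotheses: since $p^r \mid n$ we have $n \geq p^r$, and since $p^n < qp^r$ we have $q > p^{\,n-r}$. Writing $u = (p^n-1)/q$ for the cofactor, the lower bound on $q$ gives $u < p^r$, hence $1 \le u \le p^r - 1$; equivalently $q \ge (p^n-1)/(p^r-1)$. The entire argument then reduces to pinning down $u$ exactly.

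The crux is to show that the multiplicative order $d$ of $p$ modulo $q$ is exactly $n$. Since $q \mid p^n-1$ we have $d \mid n$, and since $q \mid p^d - 1$ we have $q < p^d$; combined with $q > p^{\,n-r}$ this yields $n-r < d \le n$. If $d$ were a proper divisor of $n$ then $d \le n/2$, forcing $n - r < n/2$, i.e.\ $n < 2r$; but $n \ge p^r \ge 2^r$, so $2^r < 2r$, which fails for every $r \ge 1$. Hence $d = n$, so $n \mid q-1$, and in particular $p^r \mid q-1$, i.e.\ $q \equiv 1 \pmod{p^r}$. This congruence is precisely the leverage needed to fix $u$.

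With $q \equiv 1 \pmod{p^r}$ in hand, I would reduce $p^n - 1 = qu$ modulo $p^r$. Because $p^r \mid p^n$ (as $n \ge p^r > r$), the left side is $\equiv -1$, while $q \equiv 1$ forces $u \equiv -1 \equiv p^r - 1 \pmod{p^r}$; together with $1 \le u \le p^r - 1$ this gives $u = p^r - 1$ exactly. Therefore $q = (p^n-1)/(p^r-1)$, and integrality of this quotient forces $r \mid n$. Setting $\ell = n/r \ge 2$, I may rewrite $q = ((p^r)^\ell - 1)/(p^r - 1) = 1 + p^r + \cdots + p^{(\ell-1)r}$, a repunit in base $p^r$.

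Finally I would exploit primality. A base-$p^r$ repunit of length $\ell$ is divisible by the length-$a$ repunit whenever $a \mid \ell$, so if $\ell$ were composite then $q$ would be composite; hence $\ell$ is prime. Now $p^r \mid n = r\ell$ with $\ell$ prime: if $\ell \ne p$ then $\gcd(p^r,\ell)=1$ forces $p^r \mid r$, impossible since $p^r > r$; thus $\ell = p$ and $n = rp = p^r s$, whence $r = p^{\,r-1}s \ge 2^{\,r-1}$. This inequality holds only for $r \in \{1,2\}$, and checking those two values (using $r = p^{\,r-1}s$) yields $r=1,\ s=1,\ n=p,\ q=(p^p-1)/(p-1)$ --- split into $p=2$ (case 1) and $p>2$ (case 3) --- and $r=2,\ p=2,\ s=1,\ n=4,\ q=5$ (case 2). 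I expect the main obstacle to be the second paragraph: correctly forcing the order of $p$ modulo $q$ to be the full exponent $n$, since it is this step that upgrades the loose inequalities into the exact identity $u = p^r - 1$, from which everything else follows.
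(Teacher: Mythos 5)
Your proof is correct, and it takes a genuinely different route from the paper's. The paper splits immediately into the cases $p=2$ and $p>2$ and works with explicit factorizations of $p^n-1$ (for $p=2$ the factorization $2^n-1=(2^{n/2}-1)(2^{n/2}+1)$ together with an induction showing $n<2^{n/2}-1$ for $n>4$; for $p>2$ telescoping factorizations of $p^{p^rk}-1$ and $p^{p^r}-1$), in each case bounding the prime $q$ by one of the factors and contradicting $p^n<qp^r$; the exact value of $q$ emerges only at the end of the $p>2$ branch. You instead run a unified argument valid for all $p$: from $q>p^{n-r}$ you force the multiplicative order of $p$ modulo $q$ to be exactly $n$ (the divisor bound $d\le n/2$ would give $2^r<2r$, impossible), hence $n\mid q-1$ and so $q\equiv 1\pmod{p^r}$; reducing $p^n-1=qu$ modulo $p^r$ then pins the cofactor to $u=p^r-1$, i.e.\ the exact identity $q=(p^n-1)/(p^r-1)$ in every case. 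From there $r\mid n$, the repunit length $\ell=n/r$ must be prime (else $q$ factors), and $p^r\mid r\ell$ with $p^r>r$ forces $\ell=p$ and $r=p^{r-1}s$, leaving only $r\in\{1,2\}$ and the three listed outcomes. Your route uses slightly more machinery --- orders modulo $q$, Lagrange/Fermat, the standard facts that $p^r-1\mid p^n-1$ implies $r\mid n$ and that repunits of composite length are composite --- but it buys a cleaner structural statement ($q$ is always the base-$p^r$ repunit of prime length) and avoids the paper's case-by-case factorization bookkeeping; the paper's argument, by contrast, stays entirely within elementary inequalities and induction. Both arguments are complete and arrive at the same classification.
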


\begin{proof}
First suppose $p=2$.  As $2^r \, | \, n$, $n$ must be even.  

Suppose by way of contradiction that $n > 4$.  Then $2^n - 1 = (2^{n/2} - 1)(2^{n/2} + 1)$.  Since $q$ is a prime, $q$ divides one of these integer factors, and so $q \leq 2^{n/2} + 1$.  Note that $n <  2^{n/2} - 1$.  This can be shown by induction.  When $n=6$ the result is true.  Assume that $n <  2^{n/2} - 1$.  So $2^{n/2} > n + 1$.  So $2^{(n+2)/2} = 2(2^{n/2}) > 2(n+1) = n + n + 2 > (n + 2) + 1$.  So $(n+2) < 2^{(n+2)/2} - 1$.  So $q2^r \leq qn < (2^{n/2} + 1)(2^{n/2} - 1) = 2^n - 1 < 2^n$, which is false.  So $n \leq 4$.  

Checking $n=2$ and $n=4$ one sees that $q$ must be $3$ and $5$ respectively, and $r=1$ and $2$ respectively, and in both of these cases $2^n < q 2^r$.

Suppose $p > 2$.  Suppose by way of contradiction that $n = p^rk$ for $k > 1$.  Then $p^{p^rk} - 1 = (p^{p^r} - 1)(p^{p^r(k-1)} + p^{p^r(k-2)} + ... + p^{p^r} + 1)$.  Now as $k > 1$, $p^{p^r(k-1)} + p^{p^r(k-2)} + ... + p^{p^r} + 1 > p^{p^r} - 1$.  Since $q$ is a prime, $q$ divides one of these integer factors, and so $q \leq p^{p^r(k-1)} + p^{p^r(k-2)} + ... + p^{p^r} + 1$.  Clearly $p^r < p^{p^r} - 1$ as $r < p^r$.  So $qp^r < p^n$, a contradiction.  So $n = p^r$.

Suppose by way of contradiction that $r > 1$.  Now $p^{p^r} - 1 = (p-1)(p^{p(p-1)} + ... + p^p + 1)(p^{p^2(p-1)} + ... + p^{p^2} + 1)\cdots(p^{p^{r-2}(p-1)} + ... + p^{p^{r-2}} + 1)(p^{p^{r-1}(p-1)} + ... + p^{p^{r-1}} + 1)$.  Since $q$ is a prime, $q$ divides one of these integer factors, and so $q \leq p^{p^{r-1}(p-1)} + ... + p^{p^{r-1}} + 1$.  Now it is clear that for $r \geq 4$, $r < 3^{r-2} \leq p^{r-2}$.  So $p^r < p^{p^{r-2}} < p^{p^{r-2}(p-1)} + ... + p^{p^{r-2}} + 1$.  When $r=3$, $p^3 \leq p^p <  p^{p(p-1)} + ... + p^p + 1$.  When $r=2$, also $p^2 \leq p^{p-1} < p^{p-1} + ... + p + 1$.  So we have $qp^r < p^{p^r} - 1 < p^{p^r}$, a contradiction.

So $r=1$ and $n=p$.  Now $p^p - 1 = (p-1)(p^{p-1} + ... + p + 1)$.  Now $p^{p-1} + ... + p + 1 > p$ so $q$ cannot divide $p-1$ as otherwise $qp < p^p - 1 < p^p$.  So $q$ divides $p^{p-1} + ... + p + 1$.  If $q$ were to properly divide $p^{p-1} + ... + p + 1$, then $p^{p-1} + ... + p + 1 = qs$ for some $s > 1$.  So $pq = (p/s) (p^{p-1} + ... + p + 1) < (p-1)(p^{p-1} + ... + p + 1) = p^p - 1 < p^p$, a contradiction.  So $q = p^{p-1} + ... + p + 1 = (p^p - 1)/(p-1)$, and indeed $qp > q(p-1) + 1 = p^p$.
\end{proof}

Wagstaff\cite{Wag96} shows that the primes $p<180$ for which $(p^p - 1)/(p-1)=q$ is a prime are $p=2,3,19,31$.  

Putting all of this together we get the following theorem:

\begin{theorem} \label{thm2.4}
Suppose $G$ is a group with $|G| = qp^k$ with $p < q$ primes and $k \geq 1$, and suppose that $m^*(G) = |G|$.  Suppose $N$ is a minimal normal subgroup of $G$.  Then $|N| = p^n$ for some $n \geq 1$ and $|G/C_G(N)| = qp^r$ for some $r \geq 1$.  View $N$ as an $n$-dimensional vector space over $GF(p)$ and fix a representation of $N$ as the additive group of $GF(p^n)$; and view $G/C_G(N)$ as a subgroup of $Aut(N)$.  Then $G/C_G(N)$ is conjugate in $Aut(N)$ to the automorphism group $TH$ with $T \leq S$ of order $q$ and $H \leq L$ of order $p^r$, where $S$ and $L$ are defined as before.  Furthermore, we have exactly one of the following:
\begin{enumerate}
\item $N$ is $2$-dimensional over $GF(2)$, $T$ has order $3$, and $H$ has order $2$ (and so $TH \cong S_3$).
\item $N$ is $4$-dimensional over $GF(2)$, $T$ has order $5$, and $H$ has order $4$.
\item $N$ is $p$-dimensional over $GF(p)$ with $p > 2$, $T$ has order $(p^p - 1)/(p-1) = q$, and $H$ has order $p$.
\end{enumerate}
\end{theorem}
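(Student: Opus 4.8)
The statement is essentially a synthesis of the three preceding results, so my plan is to assemble them in order and then unpack the numerical conclusion of Lemma~\ref{lem2.10} into the stated structural form. First I would invoke Proposition~\ref{prop2.12} directly: since $|G| = qp^k$ with $p < q$ and $m^*(G) = |G|$, any minimal normal subgroup $N$ has order $p^n$ for some $n \geq 1$, the quotient $G/C_G(N)$ has order $qp^r$ for some $r \geq 1$ with $p^n < qp^r$, and a Sylow $q$-subgroup of $G/C_G(N)$ acts irreducibly on $N$. Because $N$ is minimal normal, $G/C_G(N)$ acts faithfully, so after fixing the representation of $N$ as the additive group of $GF(p^n)$ I may view $G/C_G(N)$ as a subgroup $K$ of $Aut(N)$.

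Second, I would feed $K = G/C_G(N)$ into Proposition~\ref{prop2.13}. Its hypotheses — that $K$ sits in $Aut(N)$ with $|K| = qp^r$, $p < q$, $r \geq 1$, and with Sylow $q$-subgroup acting irreducibly — are exactly what the previous paragraph supplies. Hence $K$ is conjugate in $Aut(N)$ to $TH$ with $T \leq S$ of order $q$ and $H \leq L$ of order $p^r$, and moreover $p^r \mid n$. This already delivers the conjugacy claim of the theorem, so it remains only to pin down the values $p, q, n, r$.

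Third, to apply Lemma~\ref{lem2.10} I still need the divisibility $q \mid p^n - 1$, and this is where Proposition~\ref{prop2.11} enters: the Sylow $q$-subgroup $T$ is cyclic of prime order $q$ and (by Proposition~\ref{prop2.12}) acts irreducibly on the $n$-dimensional space $N$, so $q \mid p^n - 1$. Now the three conditions needed by Lemma~\ref{lem2.10} are all in hand — $p^n < qp^r$, $q \mid p^n - 1$, and $p^r \mid n$ — and the lemma forces exactly one of its three numerical cases.

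Finally, I would translate each numerical case back into the stated language of dimensions and of the orders of $T$ and $H$, which is immediate. The only point deserving a remark is case (1): there $|TH| = 6$ inside $Aut(N) = GL(2,2) \cong S_3$, a group of order $6$, so in fact $TH = GL(2,2) \cong S_3$. I do not anticipate a genuine obstacle here, since the theorem merely packages machinery already established; the only care required is to confirm that the hypotheses of each cited result are met by the object to which it is applied — in particular that $r \geq 1$, guaranteed by Proposition~\ref{prop2.12}, is what licenses both Proposition~\ref{prop2.13} and Lemma~\ref{lem2.10}.
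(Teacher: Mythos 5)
Your proposal is correct and matches the paper's intent exactly: the paper offers no separate argument for Theorem~\ref{thm2.4} beyond the phrase ``putting all of this together,'' and your assembly of Proposition~\ref{prop2.12} (order and irreducibility data), Proposition~\ref{prop2.13} (conjugacy to $TH$ and $p^r \mid n$), Proposition~\ref{prop2.11} (giving $q \mid p^n - 1$), and Lemma~\ref{lem2.10} (the three numerical cases) is precisely that synthesis. The only trivial slip is attributing faithfulness of the $G/C_G(N)$-action to minimality of $N$ — it holds automatically because one has quotiented by the centralizer, while minimality supplies irreducibility — but this does not affect the argument.
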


The case in which the solvable group $G$ is primitive is worth noting as a corollary.

\begin{corollary} \label{cor2.4}
Suppose $G$ is a primitive group with $|G| = qp^k$ with $p < q$ primes and $k \geq 1$, and suppose that $m^*(G) = |G|$.  Then $G \cong [V]K$ where $V$ is the additive group of $GF(p^n)$ viewed as an $n$-dimensional vector space over $GF(p)$, and $K = TH$ with $T \leq S$ of order $q$ and $H \leq L$ of order $p^r$, $r \geq 1$, where $S$ and $L$ are defined as before.  Furthermore, we have exactly one of the following:
\begin{enumerate}
\item $V$ is $2$-dimensional over $GF(2)$, $T$ has order $3$, and $H$ has order $2$ (and so $G \cong S_4$).
\item $V$ is $4$-dimensional over $GF(2)$, $T$ has order $5$, and $H$ has order $4$.
\item $V$ is $p$-dimensional over $GF(p)$ with $p > 2$, $T$ has order $(p^p - 1)/(p-1) = q$, and $H$ has order $p$.
\end{enumerate}
\end{corollary}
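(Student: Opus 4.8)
The plan is to recognize this as essentially a specialization of Theorem~\ref{thm2.4} to the primitive setting, where the key extra input is that primitivity forces the minimal normal subgroup to be self-centralizing and supplies the splitting. First I would observe that, since $|G| = qp^k$, Proposition~\ref{prop2.3} guarantees that $G$ is solvable. A solvable primitive group is type 1 primitive (by Baer's classification, as recorded in the introduction), so $G$ has the structure $[V]K$ where $V$ is the unique minimal normal subgroup of $G$, $V$ is self-centralizing (that is, $C_G(V) = V$), and $K$ is a complement that embeds in $Aut(V)$ and acts irreducibly and faithfully on $V$.

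Next I would take $N = V$ as the minimal normal subgroup in Theorem~\ref{thm2.4}. Because $V$ is self-centralizing we have $C_G(N) = C_G(V) = V = N$, so $G/C_G(N) = G/V$ is canonically isomorphic to the complement $K$. Theorem~\ref{thm2.4} then immediately yields $|N| = p^n$, $|G/C_G(N)| = |K| = qp^r$, and---after fixing the representation of $V$ as the additive group of $GF(p^n)$---that the image of $K$ in $Aut(V)$ is conjugate to $TH$ with $T \leq S$ of order $q$ and $H \leq L$ of order $p^r$. Conjugating inside $Aut(V)$ (equivalently, choosing the representation appropriately) lets me take $K = TH$ outright, so $G \cong [V]K$ with $K = TH$ as claimed.

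The three numbered cases are then read directly off the corresponding cases of Theorem~\ref{thm2.4}, since the dimension of $V$, the order of $T$, and the order of $H$ are exactly the data classified there. The one point requiring a small additional remark is case 1: when $V$ is $2$-dimensional over $GF(2)$ we have $Aut(V) \cong GL(2,2) \cong S_3$ of order $6$, while $|K| = |TH| = 3 \cdot 2 = 6$, so $K = Aut(V)$ and $G \cong [V]Aut(V) \cong S_4$.

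I do not expect a genuine obstacle here, as essentially all of the hard analysis is already carried out in Proposition~\ref{prop2.12}, Proposition~\ref{prop2.13}, Lemma~\ref{lem2.10}, and Theorem~\ref{thm2.4}. The one place to be careful is the logical step that primitivity (via the type 1 structure) collapses $C_G(N)$ down to $N$ itself; this is what upgrades the conclusion ``$G/C_G(N)$ is conjugate to $TH$'' of Theorem~\ref{thm2.4} into a genuine semidirect product decomposition $G \cong [V]TH$ of $G$ itself.
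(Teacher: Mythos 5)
Your proposal is correct and follows exactly the route the paper intends: the corollary is stated as an immediate specialization of Theorem~\ref{thm2.4}, using solvability (Proposition~\ref{prop2.3}) together with Baer's classification of solvable primitive groups as type 1 (recalled in the paper's introduction) to force $C_G(N) = N = V$ and supply the splitting $G \cong [V]K$. Your added remark identifying case 1 with $S_4$ via $GL(2,2) \cong S_3$ matches the paper's parenthetical claim as well.
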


We note that one can construct the group $G = V[K]$ above with $K$ acting faithfully and irreducibly on $V$, and for each choice of primes above, one sees that $G$ is Chermak-Delgado simple since $|K|>|V|$.

\section{Conclusions}
Section 1 establishes the equivalence of the indecomposable groups having Property A with the Chermak-Delgado simple groups.  Theorem 2 shows that no Chermak-Delgado simple groups exist having order $mp^k$  with $p$ a prime, $k \geq 1$, and $m$ a natural number so that $1 < m/q < p < m$ where $q$ is the smallest prime divisor of $m$.  Theorem 3 provides conditions on a Chermak-Delgado simple group of order $qp^k$ with $p < q$ primes in terms of the action of $G/C_G(N)$ on a minimal normal subgroup $N$.  Corollary 3 restates these conditions for the primitive case where $G$ splits as $[N]G/C_G(N)$.  In that case we construct primitive, Chermak Delgado simple examples for all allowable values of the primes $p$ and $q$.  $S_4$ is included as the smallest allowable example.

This is only a small step towards classifying the solvable groups having Property A, and it seems to be a worthwhile endeavor.  The next step would be a classification of the indecomposable groups having Property A of order $p^{\alpha}q^{\beta}$ (i.e. a classification of the Chermak-Delgado simple groups of order $p^{\alpha}q^{\beta}$).  On a different note, the author has constructed examples of indecomposable groups $G$ with $m^*(G) = |G|$ which are not Chermak-Delgado simple, and these groups will be discussed in a later paper.  Most of the work in this paper is contained in the author's Ph.D. dissertation, and the author would like to thank his advisor Ben Brewster for his guidance and support during this adventure.

\bibliographystyle{amsplain}
\bibliography{references}

\end{document}